\newtheorem{thm}{Theorem}[section]
\newtheorem{lma}[thm]{Lemma}
\newtheorem{cor}[thm]{Corollary}
\theoremstyle{definition}
\newtheorem{rem}[thm]{Remark}
\newtheorem{ques}[thm]{Question}
\newcommand{\R}{\mathbb{R}}
\newcommand{\N}{\mathbb{N}}
\providecommand{\norm}[1]{\lVert#1\rVert}
\newcommand{\I}{\mathcal{I}}
\renewcommand{\i}{\mathtt{i}}
\renewcommand{\j}{\mathtt{j}}
\newcommand{\g}{\mathbf{g}}
\renewcommand{\u}{\mathbf{u}}
\renewcommand{\v}{\mathbf{v}}
\newcommand{\w}{\mathbf{w}}
\newcommand{\ii}{\mathbf{i}}
\newcommand{\jj}{\mathbf{j}}
\newcommand{\G}{\mathcal{G}}
\newcommand{\hd}{\dim_\textup{H}}
\newcommand{\bd}{\dim_\textup{B}}
\newcommand{\ubd}{\overline{\dim}_\textup{B}}
\newcommand{\lbd}{\underline{\dim}_\textup{B}}
\newcommand{\Tau}{\mathcal{T}}
\title{Non-existence of the box dimension for dynamically invariant sets}
\author{Natalia Jurga} \address{Mathematical Institute, University of St Andrews, Scotland, KY16 9SS}
\email{naj1@st-andrews.ac.uk}
\thanks{The  author was  supported by an \emph{EPSRC Standard Grant} (EP/R015104/1). The author would like to express her gratitude to Ian Morris and Jonathan Fraser, whose interesting comments and suggestions improved the paper, as well as the anonymous referee who corrected an error in the proof of Lemma 3.1. The author also thanks Pablo Shmerkin, whose question stimulated this work.}
\begin{document}

\maketitle

\begin{abstract}
One of the key challenges in the dimension theory of smooth dynamical systems is in establishing whether or not the Hausdorff, lower and upper box dimensions coincide for invariant sets. For sets invariant under conformal dynamics, these three dimensions always coincide. On the other hand, considerable attention has been given to examples of sets invariant under non-conformal dynamics whose Hausdorff and box dimensions do not coincide. These constructions exploit the fact that the Hausdorff and box dimensions quantify size in fundamentally different ways, the former in terms of covers by sets of varying diameters and the latter in terms of covers by sets of fixed diameters. In this article we construct the first example of a dynamically invariant set with distinct lower and upper box dimensions. Heuristically, this describes that if size is quantified in terms of covers by sets of equal diameters, a dynamically invariant set can appear bigger when viewed at certain resolutions than at others.
\end{abstract}

\section{Introduction}

The dimension theory of dynamical systems is the study of the complexity of sets and measures which remain invariant under dynamics, from a dimension theoretic point of view. This branch of dynamical systems has its foundations in the seminal work of Bowen \cite{bowen} on the dimension of quasicircles and Ruelle \cite{ruelle} on the dimension of conformal repellers, and has since developed into an independent field of research which continues to receive noteworthy attention in the literature \cite{bhr,cpz,ds}. For an overview of this extensive field, see the monographs of Pesin \cite{pesin} and Barreira \cite{barreira-book} and the surveys \cite{bg, cp,sw}.

The most common ways of measuring the dimension of invariant sets are through the Hausdorff and the lower and upper box dimensions, which quantify the complexity of the set in related but subtly distinct ways. Roughly speaking, the Hausdorff dimension  measures how efficiently the set can be covered by sets of arbitrarily small size, whereas the lower and upper box dimensions measure this in terms of covers by sets of uniform size, along the scales for which this can be done in the most and least efficient way, respectively. Given a subset $E$ of a separable metric space $X$, the lower and upper box dimensions are defined by
\[
 \lbd E  = \liminf_{\delta \to 0} \frac{\log N_\delta(E)}{-\log \delta} \qquad \text {and} \qquad \ubd E = \limsup_{\delta \to 0} \frac{\log N_\delta(E)}{-\log \delta},
\]
respectively, where $N_\delta(E)$ denotes the smallest number of sets of diameter $\delta>0$ required to cover $E$.  If the lower and upper box dimensions coincide we call the common value the box dimension, written $\bd$, otherwise we say that the box dimension does not exist.  

For any subset $E \subseteq X$, 
\begin{equation} \label{hb}
\hd E \leq \lbd E \leq  \ubd E
\end{equation}
where $\hd$ denotes the Hausdorff dimension. A priori each inequality may or may not be strict. However, when  $E$ is invariant under a smooth mapping $f$, the additional structure imposed by the dynamical invariance of $E$ means that certain properties of $f$ can either force some degree of homogeneity or, on the contrary,  inhomogeneity across the set, forcing  equalities or strict inequalities in \eqref{hb} respectively.  Characterising which properties of $f$  %determine
imply or preclude equalities in \eqref{hb} is one of the key challenges in dimension theory.

%\begin{ques} What properties of $f$ imply/preclude equalities rather than inequalities in \eqref{hb}?\end{ques}
%the Jacobian is the multiple of an isometry

A common feature in the dimension theory of smooth \emph{conformal} dynamics is the coincidence of the Hausdorff and lower and upper box dimensions for invariant sets. For example, in the setting of smooth expanding maps, the following result pertains to a more general result which was obtained independently by Gatzouras and Peres \cite{gp} and Barreira \cite{barreira-paper},  generalising previous results of Falconer \cite{falc-paper}.

\begin{thm}[\cite{gp,barreira-paper}]\label{conformal}
Suppose $f:M \to M$ is a $C^1$ map of a Riemannian manifold $M$ and that $\Lambda=f(\Lambda)$ is a compact set such that $f^{-1}(\Lambda)\cap U \subset \Lambda$ for some open neighbourhood $U$ of $\Lambda$. Additionally, assume that
\begin{itemize}
\item $f$ is \textbf{conformal}: for each $x \in M$ the derivative $d_xf$ is a scalar multiple of an isometry,
\item $f$ is \textbf{expanding} on $\Lambda$: there exist constants $C>0$, $\lambda>1$ such that for all $x \in \Lambda$ and $u$ in the tangent space $T_xM$, 
$$\norm{d_xf^nu} \geq C\lambda^n\norm{u}.$$ 
\end{itemize}
Then for any compact set $F=f(F) \subset \Lambda$,
$$\lbd F=\ubd F=\hd F.$$
\end{thm}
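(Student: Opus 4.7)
The plan is to show that all three dimensions coincide with the Bowen dimension of $F$, defined via topological pressure. Since $f$ is conformal and $C^1$, the derivative norm $\|d_xf\|$ is a well-defined scalar and the potential $\phi(x):=-\log\|d_xf\|$ is continuous on $\Lambda$. Let $s$ be the unique solution to the Bowen pressure equation $P_{f|F}(s\phi)=0$; equivalently, by the variational principle,
\[ s = \sup\left\{ \frac{h_\mu(f)}{\int \log\|d_xf\|\, d\mu(x)} : \mu \text{ is an $f$-invariant probability measure supported on } F \right\}. \]
By the classical Bowen--Ruelle formula for conformal repellers, together with its extension to arbitrary compact invariant subsets (which is essentially what the Gatzouras--Peres and Barreira papers establish), one has $\hd F = s$. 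My goal is then to show $\ubd F \leq s$, since combined with the trivial chain $\hd F \leq \lbd F \leq \ubd F$ this forces all three to equal $s$.

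To control $N_\delta(F)$, I would deploy a Moran-cover construction. Starting from a Markov partition for $f$ on $\Lambda$, conformality and uniform continuity of $d_xf$ on the compact set $\Lambda$ yield a bounded distortion property: the depth-$n$ cylinder $[x]_n$ containing $x$ has diameter comparable to $\|d_xf^n\|^{-1}$, with multiplicative constants independent of $x$ and $n$. For each $\delta>0$ and each $x\in F$, let $n(x,\delta)$ be the smallest $n$ for which this diameter drops below $\delta$. Extracting a maximal pairwise-disjoint sub-family from $\{[x]_{n(x,\delta)}\}_{x\in F}$ produces a cover $\mathcal{M}_\delta$ of $F$ by cylinders of diameter $\asymp \delta$ with bounded multiplicity of overlaps, whence $N_\delta(F) \asymp \#\mathcal{M}_\delta$.

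The remaining step is to count $\#\mathcal{M}_\delta$ via an equilibrium state. Let $\mu_s$ be a Gibbs-type equilibrium measure for the potential $s\phi$ supported on $F$. The Gibbs property combined with bounded distortion gives, for each $U=[x]_{n(x,\delta)} \in \mathcal{M}_\delta$,
\[ \mu_s(U) \asymp e^{s\, S_{n(x,\delta)}\phi(x)} \asymp |U|^s \asymp \delta^s. \]
Summing over $\mathcal{M}_\delta$ and using $\mu_s(F)=1$ yields $\#\mathcal{M}_\delta \asymp \delta^{-s}$, so $\lbd F = \ubd F = s = \hd F$ as required.

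The main obstacle is producing an equilibrium state $\mu_s$ on the arbitrary compact invariant subset $F$ with a workable Gibbs property; the Ruelle--Perron--Frobenius machinery applies cleanly to full repellers but not directly to a proper invariant subset. The standard remedy is to approximate $F$ from within by transitive subshifts of finite type, invoke the thermodynamic formalism on these, and pass to a limit using upper semi-continuity of the entropy map and continuity of pressure in the potential. Arranging this approximation so that both the upper bound on $\#\mathcal{M}_\delta$ (from summability of Gibbs mass) and the lower bound (from positivity of mass on each cylinder) survive the limit is the technical crux of the argument, and the step where I expect to spend most effort.
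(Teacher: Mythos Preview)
The paper does not prove this theorem; it is quoted as a background result with attribution to Gatzouras--Peres and Barreira (and earlier Falconer), so there is no ``paper's own proof'' to compare against. That said, your proposal deserves comment on its own merits.

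Your argument has a genuine gap at the step you yourself flag. For an \emph{arbitrary} compact forward-invariant subset $F\subset\Lambda$ there is in general no equilibrium state for $s\phi$ with the uniform lower Gibbs bound $\mu_s(U)\gtrsim\delta^s$ across \emph{all} Moran pieces $U$ meeting $F$; an equilibrium state produced by weak-$*$ compactness may fail to have full support, and then $\sum_U\mu_s(U)\le 1$ only controls the cylinders it charges. Your proposed remedy, approximating $F$ from within by subshifts of finite type, goes in the wrong direction: it yields lower bounds on $\hd F$ (via Gibbs states on the approximants) but gives no handle on $\ubd F$, which requires covering \emph{all} of $F$, not an increasing exhaustion. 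Approximating from outside lands you on $\Lambda$, whose Bowen dimension may exceed $s$.

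The route actually taken in the cited literature bypasses thermodynamic formalism for this step. Conformality plus bounded distortion plus the forward invariance $f(F)=F$ make $F$ \emph{quasi-self-similar} in Falconer's sense: for each $x\in F$ and small $r>0$, an appropriate iterate $f^n$ maps $F\cap B(x,r)$ into $F$ as a bi-Lipschitz expansion by a factor $\asymp r^{-1}$. Falconer's implicit theorem for quasi-self-similar sets then gives $\hd F=\ubd F$ directly, with no measure needed. Equivalently, one obtains submultiplicativity of the Moran-cover cardinalities $\#\mathcal{M}_\delta$ from the self-mapping structure alone, and Fekete's lemma forces $\lbd F=\ubd F$. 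If you want to retain a thermodynamic flavour, the correct object is a conformal (Patterson--Sullivan type) measure on $F$ rather than a Gibbs equilibrium state, but even that is unnecessary here.
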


%The results which led up to this were Bowen \cite{bowen} for Hausdorff dimension of quasicircles, generalised to repellers of $C^{1+\alpha}$ topologically mixing expanding maps by Ruelle \cite{ruelle}, and then Falconer \cite{falc-paper} showed coincidence with box dimension.

Similar results hold in the setting of smooth diffeomorphisms. For example, if $f:M \to M$ is a topologically transitive $C^1$ diffeomorphism with a basic set $\Lambda$, and $f$ is conformal on $\Lambda$ then $\hd \Lambda=\lbd \Lambda=\ubd \Lambda$ \cite{pesin, barreira-paper} and an analogous statement holds for the dimensions of the intersections of $\Lambda$ with its local stable and unstable manifolds \cite{takens, pv}. 

%This line of inquiry has received considerable attention, especially regarding \eqref{hb}(a). This has come in the form of two types of result:  results which affirm equality of the Hausdorff and box dimensions, including many classical results concerning \emph{conformal dynamics}, and secondly examples of invariant sets which arise in the context of \emph{non-conformal dynamics} whose Hausdorff and box dimensions are distinct, \cite{b,m,pw,pu}. 
%{\color{red}This latter phenomenon is especially emphasised in the literature on non-conformal dynamics. Maybe detail the results here? Repellers, hyperbolic sets. Conformal:homogenous} 

In contrast, in the realm of smooth \emph{non-conformal} dynamical systems, coincidence of the Hausdorff and box dimensions is no longer a universal trait of invariant sets. Indeed, examples of invariant sets with distinct Hausdorff and box dimensions have attracted enormous attention \cite{neun, bedford, mcmullen,kp, lg, pw} and discussion in surveys \cite{cp, bg,fraser}. This type of dimension gap result exploits the fact that the Hausdorff dimension quantifies the size of the set in terms of covers by sets of varying diameters rather than fixed diameters which are used by the box dimension. Indeed invariant sets of certain non-conformal dynamics will contain long, thin and well-aligned copies of itself, meaning that covering by sets of varying diameter is often more efficient, inducing this type of dimension gap. However, 
%although the existence of the box dimension is a source of open questions in other parts of fractal geometry e.g. \cite{sss}
% although the coincidence of the box dimensions is a topic of interest in other areas of fractal geometry such as Falconer's open question concerning the existence of the box dimension for sub-self-similar sets \cite{sss}, 
surprisingly there seems to be no mention in the literature of the possibility of a dynamically invariant set with \emph{distinct lower and upper box dimensions}.
% Indeed all known examples of sets whose upper and lower box dimensions are distinct, are contrived constructions which mainly serve the purpose of showing such a phenomenon is even possible, see for instance the ``non-stationary Moran constructions'' of Barreira and example in Falconer.
Our main result demonstrates the existence of such sets.

\vbox{
\begin{thm} \label{main}
	There exist integers $n>m\geq 2$ and a compact subset of the torus $F \subset \mathbb{T}^2$ such that $F$ is invariant, $F=T(F)$, under the expanding toral endomorphism
$$T(x,y)=(mx \, \mathrm{mod}\, 1, ny\, \mathrm{mod} \,1)$$
and 
$$\lbd F<\ubd F.$$
In particular, the box dimension of $F$ does not exist. 
\end{thm}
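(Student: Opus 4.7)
The plan is to proceed symbolically. Identify each $(x,y)\in\mathbb{T}^2$ with the pair of its $m$-adic and $n$-adic expansions, giving a coding map from $\Sigma := (\{0,\ldots,m-1\}\times\{0,\ldots,n-1\})^{\mathbb{N}}$ onto $\mathbb{T}^2$ that intertwines the left shift $\sigma$ with $T$. Compact $T$-invariant sets then correspond, up to boundary ambiguities, to closed $\sigma$-invariant subshifts $X\subseteq\Sigma$. Under this correspondence $N_{n^{-k}}(F)$ is, up to multiplicative constants, the number of \emph{approximate squares at level} $k$: pairs $(\mathbf{a},\mathbf{b})$ with $\mathbf{a}\in\{0,\ldots,m-1\}^{\ell}$, $\mathbf{b}\in\{0,\ldots,n-1\}^{k}$, $\ell=\lceil k\log n/\log m\rceil$, that extend to a word of $X$ (with any choice of $b_{k+1},\ldots,b_\ell$). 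The theorem therefore reduces to designing a subshift $X$ whose approximate-square count has distinct $\liminf$ and $\limsup$ growth rates in $k$.

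To do so I would try to build $X$ so that it ``looks like'' two different Bedford--McMullen carpets along two widely separated sequences of scales. Concretely, pick two digit sets $D_0,D_1\subseteq\{0,\ldots,m-1\}\times\{0,\ldots,n-1\}$ whose associated Bedford--McMullen carpets have distinct box dimensions $d_0\neq d_1$, and fix a very rapidly growing sequence $L_1\ll L_2\ll L_3\ll\cdots$. Engineer $X$ as a hierarchical sofic or substitution-type subshift whose local structure is governed by $D_0$ on blocks of length $L_j$ for even $j$ and by $D_1$ on blocks of length $L_j$ for odd $j$. The block structure must be enforced through an intrinsic shift-invariant mechanism --- for instance, by adjoining marker symbols that encode block type and position within the hierarchy --- so that no position-dependent rule is needed. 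At scale $\delta_j := n^{-L_j}$, approximate squares of $F$ would then be confined to blocks of type $j\bmod 2$, giving $\log N_{\delta_j}(F)/(-\log\delta_j)\approx d_{j\bmod 2}$, and rapid growth of $L_j$ would render transitional scales negligible, yielding $\lbd F<\ubd F$.

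The main obstacle is reconciling shift-invariance with genuinely scale-dependent behaviour. A naive level-dependent construction is immediately broken by a shift, so the block structure must be encoded intrinsically through an automaton or substitution. Even more delicately, taking the topological closure of any block-structured shift typically introduces spurious limit points --- for example, shifting into the centre of a very long $D_1$-block and taking the limit produces a ``pure-$D_1$'' sequence whose image is a full Bedford--McMullen carpet of dimension $d_1$, which would inflate $N_{\delta}(F)$ at every scale and eliminate the oscillation. Overcoming this requires salting each block with periodic hierarchical markers so that every sufficiently long window retains its structural signature even in the closure. Once $X$ is in place, checking that the induced $F$ satisfies $T(F)=F$ reduces to showing $\sigma$ is surjective on $X$, which is automatic if $X$ is coded from a two-sided subshift, and the proof concludes with a careful combinatorial count of approximate squares at each scale.
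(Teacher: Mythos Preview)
Your symbolic framework (coding by $(m,n)$-adic digits, counting approximate squares, reducing to a subshift construction) is exactly the one the paper uses. But your proposed construction is different from the paper's and, as you essentially anticipate, the closure obstacle you flag is fatal to the ``two carpets plus hierarchical markers'' idea in the form you describe. If long $D_1$-blocks are salted with markers at some fixed density, the closure still contains sequences that are $D_1$ up to that density of markers; the projected set is then a carpet of dimension $d_1-O(\varepsilon)$, present at \emph{every} scale, so the oscillation disappears. Markers that encode position within a hierarchy cannot survive the shift, and markers that are themselves shift-invariant cannot distinguish scales. You have correctly located the difficulty but not resolved it.

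The paper's mechanism is quite different and sidesteps the closure problem rather than fighting it. With $m=2$, $n=12$, it takes $\Omega=\{(1,i):3\le i\le 12\}$ (ten symbols, all with first coordinate $1$) and builds the \emph{coded} subshift generated by the code words $(1,1)$, $(2,1)$, and $\w(1,2)^{13^N}$ for every $\w\in\Omega^{13^N}$ and $N\ge1$. The point is twofold. First, the limit set in the closure is essentially $\Omega^{\mathbb N}$, whose image is a vertical Cantor set of box dimension $\log 10/\log 12<1$ --- harmless, because $\Omega$ has \emph{trivial horizontal projection}. Second, the scale-dependence comes not from block labels but from a \emph{commitment}: using $\Omega$ for $13^N$ steps obligates a tail of $13^N$ symbols $(1,2)$ with no horizontal freedom. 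At scale $12^{-13^N}$ one has just cleared this tail and can append $\{(1,1),(2,1)\}$ freely, giving a large count; at the intermediate scale $12^{-13^{N-1/2}}$ any word beginning in $\Omega^{13^{N-1/2}}$ is still mid-commitment (since the minimal legal block length is $13^N$), so its horizontal projection is a single point and the count is small. The two key ingredients you are missing are (i) choosing the ``high-entropy'' alphabet to project to a single column, so that its full shift in the closure contributes nothing to the horizontal count, and (ii) replacing hierarchy markers by a length-dependent penalty tail built into the code words.
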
}

Since $n>m$, $T$ is a non-conformal map. Well-known examples from the literature, such as Bedford-McMullen carpets \cite{fraser}, demonstrate that equality of the Hausdorff and box dimensions is not guaranteed in Theorem \ref{conformal} if the assumption of conformality is dropped. Furthermore, Theorem \ref{main} indicates that the lower and upper box dimensions need not coincide either in Theorem \ref{conformal} if the assumption of conformality is dropped. This is arguably a more striking type of dimension gap since, while it is easy to see that sets invariant under non-conformal dynamics may cease to be homogeneous in space, which is captured by the possibility of distinct Hausdorff and box dimensions, one would expect the dynamical invariance to at least force homogeneity in scale, but our result demonstrates that this too can fail. In particular Theorem \ref{main} describes that, when measuring size in terms of covers by sets of equal diameter, a dynamically invariant set can sometimes appear bigger and at other times appear smaller depending on the ``resolution'' we are viewing it at. We highlight that our construction is also significantly more involved than standard examples of invariant sets with distinct Hausdorff and box dimensions, such as Bedford-McMullen carpets.

The dynamics of $T$ on the invariant set $F$, which will be constructed in \S \ref{constr}, has two key features which in conjunction induce distinct box dimensions. Firstly, the non-conformality of $T$ causes the box dimensions of $F$ to be sensitive to the length of time it takes for an orbit of $T$ to move from a subset $A \subset F$ which is ``entropy maximising''  for the dynamics of $T$ to a subset $B$ which is ``entropy maximising'' for the dynamics of the projection $x \mapsto mx\, \textnormal{mod}\, 1$ of $T$. Secondly the dynamics on $F$, which can be modelled by a topologically mixing  \emph{coded subshift} \cite{bh} on an appropriate symbolic space, has the property that the length of time it takes an orbit of $T$ to move from $A$ to $B$ is highly dependent on how long the orbit has spent in $A$. In particular, the dynamics fails to satisfy most forms of specification \cite{kwietniak}. The resolution at which $F$ is viewed determines how long the orbits of points of interest (for the dimension estimates at that particular resolution) spend in $A$, and combined with the properties mentioned above this forces distinct box dimensions. 

%We will give the precise construction of $F$ in \S \ref{constr}. The dynamics on $F$ is topologically mixing and can be modelled by a \emph{coded subshift} \cite{bh} on an appropriate symbolic space. The dynamics fails to satisfy most forms of specification \cite{kwietniak}; this is not surprising since most standard specification properties are strong enough to imply that the box dimension of the invariant set exists and satisfies a natural formula in terms of entropy, by using similar arguments to \cite{kp,fj}. On the contrary, the key feature of the dynamics on $F$ which induces its distinct box dimensions is very much orthogonal to the notion of specification, in that the length of time it takes an orbit of $T$ to move from a subset $A \subset F$ to $B \subset F$ is highly dependent on how long the orbit has spent in $A$. Due to the non-conformality of $T$, the box dimensions are sensitive to the return time from $A$ to $B$ where $A$ is an ``entropy maximising'' subset for the dynamics of $T$ and $B$ is an ``entropy maximising'' subset for the dynamics of the projection $x \mapsto mx\, \textnormal{mod}\, 1$ of $T$.  Since this return time varies wildly depending on how long the dynamics has spent in $A$, which in turn is determined by the scale at which the box dimensions are being estimated, this forces distinct box dimensions.

Finally, we discuss some connections between Theorem \ref{main} and the literature on self-affine and sub-self-affine sets. Let $\{S_i:\R^d \to \R^d\}_{i=1}^N$ be a collection of affine contractions, i.e. $S_i(\cdot)=A_i(\cdot)+\mathbf{t}_i$ for each $1\leq i \leq N$ where $A_i \in GL(d,\R)$ with Euclidean norm $\norm{A_i}<1$ and $\mathbf{t}_i \in \R^d$. We call $\{S_i\}_{i=1}^N$ an affine iterated function system. A \emph{sub-self-affine set} \cite{kv}, is a non-empty, compact set $E \subset \R^d$ such that
\begin{equation} \label{ssa}
E \subseteq \bigcup_{i=1}^N S_i(E).
\end{equation}
If \eqref{ssa} is an equality then $E$ is called a \emph{self-affine set}, in particular every self-affine set is an example of a sub-self-affine set. Every affine iterated function system admits a unique self-affine set. However, there are infinitely many sub-self-affine sets which are not self-affine. Indeed, the unique self-affine set is the image of the full shift $\{1,\ldots, N\}^\N$ under an appropriate projection induced from the family $\{S_i\}_{i=1}^N$, whereas sub-self-affine sets are in one-to-one correspondence with the projections of subshifts of the full shift. %If $\{S_i\}_{i=1}^N$ satisfies the open set condition (there exists an open set $U$ such that $\bigcup_{i=1}^N S_i(U) \subseteq U$ where the images in the union are pairwise disjoint)
Under suitable ``separation conditions'' on $\{S_i\}_{i=1}^N$, any sub-self-affine set $E$ satisfies $f(E) \subseteq E$ for an appropriate piecewise expanding map $f$ given by the inverses of the contractions. The set $F$ which will be constructed in \S \ref{constr} to prove Theorem \ref{main} is a sub-self-affine set (which is not self-affine) for the affine iterated function system induced from the inverse branches of $T$. 

The dimension theory of self-affine sets has been an active topic of research since the 1980s and substantial progress has been made in recent years. Sub-self-affine sets were introduced by K\"aenm\"aki and Vilppolainen \cite{kv} as natural analogues of sub-self-similar sets which were studied earlier by Falconer \cite{sss}. 
%We note that while the box dimension of every sub-self-similar set satisfying the open set condition exists, the question of whether the box dimension of a general sub-self-similar set exists has been open since it was posed by Falconer \cite{sss}.
It is known by the results of Falconer \cite{falconer} and K\"aenm\"aki and Vilppolainen \cite{kv} that the box dimension of a generic sub-self-affine sets exists, moreover this has been verified for large explicit families of planar self-affine sets \cite{bhr}.  However, the following question was open till now.

\begin{ques} \label{conj} Does the box dimension of every (sub-)self-affine set exist?
\end{ques}

The version of the above question for self-affine sets is a folklore open question within the fractal geometry community, to which the answer is widely conjectured to be affirmative. In contrast, a corollary of our main result is that the answer to Question \ref{conj} for general sub-self-affine sets is negative.

\begin{cor}
There exist sub-self-affine sets whose box dimension does not exist.
\end{cor}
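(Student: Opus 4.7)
The plan is to deduce the corollary directly from Theorem \ref{main} by showing that the invariant set $F$ produced there is a sub-self-affine set for the affine iterated function system formed by the inverse branches of $T$.

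First I would introduce this IFS. For $T(x,y) = (mx \bmod 1, ny \bmod 1)$ with $m, n \ge 2$, the $mn$ maps
\[
S_{i,j}(x,y) = \left( \frac{x+i}{m},\ \frac{y+j}{n} \right), \qquad 0 \le i < m,\ 0 \le j < n,
\]
are diagonal affine contractions of $\R^2$ with Euclidean norm $\max(1/m, 1/n) < 1$, hence form an affine IFS in the sense of the paper. By direct computation, $T \circ S_{i,j} = \mathrm{id}$ on $[0,1)^2$, so each $S_{i,j}$ is genuinely a local inverse branch of $T$.

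Next I would verify the sub-self-affine inclusion $F \subseteq \bigcup_{i,j} S_{i,j}(F)$, regarding $F \subset [0,1)^2 \subset \R^2$. Given any $y = (y_1, y_2) \in F$, the invariance $T(F) = F$ from Theorem \ref{main} gives $T(y) \in F$. Setting $k = \lfloor m y_1 \rfloor \in \{0, \ldots, m-1\}$ and $l = \lfloor n y_2 \rfloor \in \{0, \ldots, n-1\}$, a short computation yields $S_{k,l}(T(y)) = y$, so $y \in S_{k,l}(F)$. This establishes the required inclusion and shows that $F$ is sub-self-affine.

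Since Theorem \ref{main} also supplies $\lbd F < \ubd F$, the box dimension of $F$ does not exist, completing the proof of the corollary. There is no substantive obstacle at this step: the corollary is a direct consequence of Theorem \ref{main} together with the elementary observation that $T$-invariant sets are automatically sub-self-affine for the IFS of inverse branches. All of the work lies in the construction of $F$ underlying Theorem \ref{main} itself.
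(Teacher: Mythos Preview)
Your approach is correct and matches the paper's: the corollary is deduced directly from Theorem~\ref{main} by observing that the $T$-invariant set $F$ constructed there is sub-self-affine for the IFS of inverse branches of $T$, which the paper records as an observation in \S\ref{constr} without a formal proof. Your only slip is the claim $F\subset[0,1)^2$---for instance $(1,0)=\Pi((2,1)^\infty)\in F$---but this is easily repaired by taking $k=m-1$ when $y_1=1$ (and similarly for $l$).
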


\begin{comment}

If  $\{S_i:\R^d \to \R^d\}_{i=1}^\infty$ is a countable collection of affine contractions whose Euclidean norms are uniformly bounded above, $\sup_{i \in \N} \norm{S_i}<1$, then there is a canonical way to define the infinitely generated self-affine set $E=\bigcup_{i=1}^\infty S_i(E)$ (see \cite{kr}), which generally will not be compact. The countable stability of the Hausdorff dimension has allowed the Hausdorff dimension of certain types of infinitely generated self-affine sets to be calculated in terms of the supremum of its finitely generated self-affine subsets \cite{kr,reeve}, but in general the dimension theory of infinitely generated self-affine sets is not well understood. We will show that the set $F$ from Theorem \ref{main} contains an infinitely generated self-affine set which also has distinct lower and upper box dimensions, in particular proving that Question \ref{conj} is also negative for infinitely generated self-affine sets.

\begin{cor}\label{inf}
There exist infinitely generated self-affine sets whose box dimension does not exist.
\end{cor}

\end{comment}

\noindent \textbf{Organisation of paper.} In \S \ref{constr} we construct the set $F$ and its underlying subshift $\Sigma$ and offer some heuristic reasoning behind Theorem \ref{main}. \S \ref{entropy} contains entropy estimates. In \S \ref{dim} we introduce the scales for the lower and upper box dimension computations and prove Theorem \ref{main}. \S \ref{further} contains some questions for further investigation.

\section{Construction of $(\times m, \times n)$-invariant set.}\label{constr} 

Fix $m=2$, $n=12$. Let $\Delta=\{(a,b)\; : \; 1 \leq a \leq 2, \; 1 \leq b \leq 12, \; a,b \in \mathbb{N}\}.$  For any $(a,b) \in \Delta$ define the contraction $S_{(a,b)}:[0,1]^2 \to [0,1]^2$ as
$$S_{(a,b)}\left(x,y\right)=\left(\frac{x}{2}+ \frac{a-1}{2}, \frac{y}{12}+\frac{b-1}{12}\right) $$
which are the partial inverses of $T$. If $\i, \j \in \Delta^\N$ with $\i \neq \j$ we let $\i \wedge \j$ denote the longest common prefix to $\i$ and $\j$, and denote its length by $|\i \wedge \j|$. We equip $\Delta^\N$ with the metric 
\begin{equation*}
d(\i,\j)= \begin{cases} 
\frac{1}{2^{|\i \wedge \j|}} &  \textnormal{if} \; \i \neq \j \\
0 & \textnormal{if} \; \i=\j
\end{cases} \end{equation*}
 The set $F$ that satisfies Theorem \ref{main} will be given by the projection of a set $\Sigma \subseteq \Delta^\N$ under the continuous and surjective (but not injective) coding map $\Pi:\Delta^\N \to [0,1]^2$ given by
$$\Pi\left((a_1,b_1)(a_2,b_2) \ldots\right):=\lim_{n \to \infty} S_{(a_1,b_1) \cdots (a_n,b_n)}(0)$$
%Consider any compact $(\times \m, \times \n)$-invariant set $F$, meaning that $T(F) \subseteq F$. Then there exists a digit set $\I \subseteq\Delta_{\m,\n}$ and a subshift $\Sigma$ on the digit set $\I$ (meaning a compact $\sigma$-invariant subset $\Sigma \subseteq \I^\N$, i.e. $\sigma(\Sigma) \subseteq \Sigma$ where $\sigma:\Sigma \to \Sigma$ denotes the left shift map) such that $F=\Pi(\Sigma)$. 
where $S_{(a_1,b_1) \cdots (a_n,b_n)}$ denotes the composition $S_{(a_1,b_1)}\circ \cdots \circ S_{(a_n,b_n)}$.

%13> \left(\frac{\log 12}{\log 2}\right)^2$.
Let $\Omega=\{(1,i)\}_{i=3}^{12}$. For each $N \in \N$ let $\Omega^N$ denote words of length $N$ with symbols in $\Omega$, and $\Omega^\N$ the set of infinite sequences with symbols in $\Omega$. Given any $(a,b) \in \Delta$, $(a,b)^n$ denotes the word $(a,b)(a,b) \cdots (a,b)$ of length $n$. Define $\mathcal{C}$ to be the collection of words 
\begin{equation} \mathcal{C}:=\{(1,1), (2,1)\} \cup \bigcup_{N=1}^\infty \bigcup_{\w \in \Omega^{13^N}} \{\w(1,2)^{13^N}\}$$ and  $$B:=\left\{\u\u_1\u_2 \u_3\ldots \; \;: \;\;  \textnormal{$\u_i \in \mathcal{C}$ for all $i \in \N$,} \; \textnormal{$\u$ is a suffix of some word in $\mathcal{C}$}\right\}. \label{codes} \end{equation}
Then we define the sequence space $\Sigma=\overline{B}$.\footnote{The set of accumulation points $\Sigma \setminus B$ will turn out to be unimportant for our analysis, but for the readers convenience we provide a description of this set in \eqref{boundary}.} Equivalently $B$ can be understood as the set of all infinite sequences which label a one-sided infinite path on the directed graph $G$ in Figure \ref{G}. $G$ is called the presentation of $\Sigma$.
\begin{figure}[H]
\centering
\begin{subfigure}{0.5\textwidth}
\centering
%\begin{figure}[H]
%\begin{center}
   \begin{tikzpicture}[
roundnode/.style={circle, draw=black!60, fill=black!5, thick, minimum size=2mm},]
        \node [roundnode] (1) {$v$} ;

 \path[->] (1) edge  [loop right] [dashed] node[auto, yshift=-3pt]  {\small $\substack{ \w\,(1,2)^z \\ (\w \in \Omega^z, \; z=13^N)}$} ();
        \path[->] (1) edge  [in=110,out=150,loop] node[auto, xshift=10pt, yshift=-3pt] {\tiny $(1,1)$} ();
        \path[->] (1) edge  [in=-110,out=-150,loop] node[auto, xshift=-12pt, yshift=-14pt] {\tiny $(2,1)$} ();
    \end{tikzpicture}
%\end{center} 

%\end{figure}
\end{subfigure}%
\begin{subfigure}{0.5\textwidth}
\centering
%\begin{figure}[H]
%\begin{center}
\begin{tikzpicture}[xscale=2, yscale=0.33333]
\draw[xstep=1cm,ystep=1cm,color=black] (0,0) grid (2,12);
\filldraw[fill=yellow!50!white!50!, draw=black] (0,0) rectangle (1,1);
\filldraw[fill=yellow!50!white!50!, draw=black] (0,1) rectangle (1,2);
\filldraw[fill=yellow, draw=black] (0,2) rectangle (1,3);
\filldraw[fill=yellow, draw=black] (0,3) rectangle (1,4);
\filldraw[fill=yellow, draw=black] (0,4) rectangle (1,5);
\filldraw[fill=yellow, draw=black] (0,5) rectangle (1,6);
\filldraw[fill=yellow, draw=black] (0,6) rectangle (1,7);
\filldraw[fill=yellow, draw=black] (0,7) rectangle (1,8);
\filldraw[fill=yellow, draw=black] (0,8) rectangle (1,9);
\filldraw[fill=yellow, draw=black] (0,9) rectangle (1,10);
\filldraw[fill=yellow, draw=black] (0,10) rectangle (1,11);
\filldraw[fill=yellow, draw=black] (0,11) rectangle (1,12);
\filldraw[fill=yellow!50!white!50!, draw=black] (1,0) rectangle (2,1);
\node at (0.5,+0.5) {\tiny$(1,1)$};
\node at (0.5,+1.5) {\tiny$(1,2)$};
\node at (0.5,+2.5) {\tiny$(1,3)$};
\node at (0.5,+3.5) {\tiny$(1,4)$};
\node at (0.5,+4.5) {\tiny$(1,5)$};
\node at (0.5,+5.5) {\tiny$(1,6)$};
\node at (0.5,+6.5) {\tiny$(1,7)$};
\node at (0.5,+7.5) {\tiny$(1,8)$};
\node at (0.5,+8.5) {\tiny$(1,9)$};
\node at (0.5,+9.5) {\tiny$(1,10)$};
\node at (0.5,+10.5) {\tiny$(1,11)$};
\node at (0.5,+11.5) {\tiny$(1,12)$};
\node at (1.5,+0.5) {\tiny$(2,1)$};
\end{tikzpicture}
%\end{center}
%\end{figure}
\end{subfigure} 
\caption{Left: the presentation $G$ of $\Sigma$. The dashed loop indicates that for each $N \in \N$ and $\w \in \Omega^{13^N}$ there is a path of length $2 \cdot 13^N$ which begins and ends at $v$ such that its sequence of labels reads $\w (1,2)^{13^N}$. Right: Images of $[0,1]^2$ under $S_{(a,b)}$, for each $(a,b)$ that labels some edge in $G$. The darker coloured rectangles correspond to $S_{(a,b)}([0,1]^2)$ for $(a,b) \in \Omega$.}\label{G}
\end{figure}
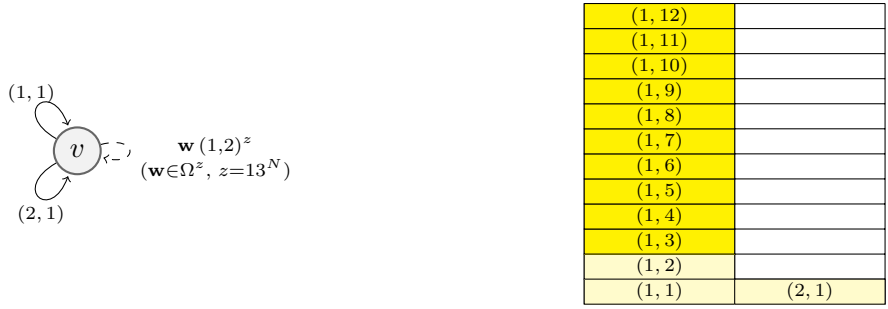

It is easy to check that $\sigma(\Sigma)=\Sigma$ where $\sigma:\Sigma \to \Sigma$ denotes the left shift map. In particular, $\Sigma$ is an example of a \emph{coded subshift}, meaning a subshift which can be expressed as the closure of the space of all infinite paths on a path-connected (possibly infinite) graph, which were first introduced by Blanchard and Hansel \cite{bh}. Note that whenever this graph is finite, its coded subshift is necessarily sofic, and that any $(\times m, \times n)$-invariant set which can be modelled by a sofic shift has a well-defined box dimension which can be explicitly computed \cite{kp,fj}.  Finally we set $F=\Pi(\Sigma)$, noting that $F=T(F)$ since $\sigma(\Sigma)=\Sigma$ and $\Pi \circ \sigma=T \circ \Pi$. From this it is easy to see that $F$ is a sub-self-affine set for the iterated function system $\{S_{(a,b)}: (a,b) \in \Delta\}$.

While of course it will  be necessary to cover the entirety of $F$ and obtain bounds on the size of this cover at different scales, the proof of Theorem \ref{main} will essentially boil down to the asymptotic difference that emerges between (a) the size of the cover, by squares of side $12^{-13^N}$, of the intersection of $F$ with the collection of rectangles $\{S_\ii([0,1]^2): \ii \in \Omega^{13^N}\} $  and (b) the size of the cover, by squares of side $12^{-13^{N-1/2}}$, of the intersection of $F$ with the collection of rectangles $\{S_\ii([0,1]^2): \ii \in \Omega^{13^{N-1/2}}\}$. 

Roughly speaking, $F$ occupies a large proportion of the width of each rectangle $S_\ii([0,1]^2)$ in case (a).   Such a rectangle has width $2^{-13^N}$ and height $12^{-13^N}$ (which equals the sidelength of squares in the cover). For any $\ii \in \Omega^{13^N}$ and $\jj \in \{(1,1),(2,1)\}^{13^N(\log 12/\log 2-2)}$, $\ii(1,2)^{13^N}\jj$ constitutes a legal word in $\Sigma$ and each $S_{\ii(1,2)^{13^N}\jj}([0,1]^2)$ has width roughly $12^{-13^N}$ (which equals the sidelength of squares in the cover), therefore $S_\ii([0,1]^2)$ requires roughly $2^{13^N(\log 12/\log 2-2)}$ squares to cover it. Importantly, this is a positive power of $12^{13^N}$, which indicates ``growth'' in dimension. 

On the other hand,  $F$ occupies a very thin proportion of the width of each rectangle $S_\ii([0,1]^2)$ in case (b). Each such rectangle has width  $2^{-13^{N-1/2}} $ and height $12^{-13^{N-1/2}}$ (which equals the sidelength of squares in this cover). Any $\i \in \Sigma$ which begins with a word in $\Omega^{13^{N-1/2}}$ can be written as $\i=\ii\jj\j$ for $\ii \in \Omega^{13^{N-1/2}}$, $\jj =(1,b_1) \cdots (1,b_{13^N})$ and some infinite word $\j \in \Sigma$. In particular, any point in $F \cap S_\ii([0,1]^2)$ belongs to $S_{\ii\jj}([0,1]^2)$ which has width \emph{less than} $12^{-13^{N-1/2}}$. In particular, only one square of sidelength $12^{-13^{N-1/2}}$ is required to cover $S_\ii([0,1]^2)$, meaning no further ``growth'' in dimension at this scale.

\subsection{Notation.} For any $N \in \N$ we let $\Sigma_N$ denote the subwords of sequences in $\Sigma$ of length $N$. Finite words in $\bigcup_{N=1}^\infty \Sigma_N$ will be denoted in bold, using notation such as $\ii$ or $\jj$ whereas infinite words in $\Sigma$ will be denoted using typewriter notation such as $\i$ and $\j$. For integers $n \geq 1$, and infinite sequences $\i=(a_1,b_1)(a_2,b_2) \cdots$, $\i|n$ denotes the truncation of $\i$ to its first $n$ symbols $\i|n=(a_1,b_1) \cdots (a_n,b_n)$. The same notation is used for truncations of finite words $\ii=(a_1,b_1) \cdots (a_m,b_m)$ to its first $n$ symbols $\ii|n=(a_1,b_1) \cdots (a_n,b_n)$  when $m \geq n$. For any finite word $\ii=(a_1,b_1) \cdots (a_n,b_n)$, its length is denoted by $|\ii|=n$. Given any $(a,b) \in \Delta$, $(a,b)^\infty$ denotes the infinite word $(a,b)(a,b) \ldots$. For any finite word $\ii$ we denote the cylinder set by $[\ii]:=\{\i \in \Sigma: \i|n=\ii\}$. We let $\emptyset$ denote the empty word.

To avoid profusion of constants, we write $A \lesssim B$ if $A \leq cB$ for some universal constant $c>0$. We write $A \lesssim_\varepsilon B$ if $A \leq c_\varepsilon B$ for all $\varepsilon>0$ where the constant $c_\varepsilon$ depends on $\varepsilon$. We write $A \gtrsim B$ if $B \lesssim A$ and $A \approx B$ if both $A \lesssim B$ and $B \lesssim A$, and define the notation $A \gtrsim_\varepsilon B$ and $A \approx_\varepsilon B$ analogously. 

\section{Entropy estimates}\label{entropy}

In this section we obtain estimates on the entropy of important subsets of $\Sigma$. Let $\G_N$ be the words in $\Sigma_N$ which label a path that starts and ends at the vertex $v$ of the graph $G$ in Figure \ref{G}. Define
$$h(\G):=\limsup_{N \to \infty} \frac{1}{N} \log \# \G_N$$
where $\# \G_N$ denotes the cardinality of $\G_N$.

\begin{lma}
 $h(\G) \leq \log 4$.
\end{lma}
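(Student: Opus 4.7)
The plan is to decompose every element of $\mathcal{G}_N$ uniquely into an ordered concatenation of elementary loops based at $v$ in $G$, and then translate this into a counting recurrence for $a_N := \#\mathcal{G}_N$.

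The graph $G$ has a single vertex $v$, and the loops at $v$ split into three families: the length-$1$ loops labelled $(1,1)$ and $(2,1)$, and, for each $M \geq 1$, the $10^{13^M}$ loops of length $2\cdot 13^M$ labelled $\mathbf{w}(1,2)^{13^M}$ with $\mathbf{w} \in \Omega^{13^M}$ (recalling that $|\Omega|=10$). Since every word in $\mathcal{G}_N$ is a path from $v$ to $v$ and $v$ is the only vertex, every such word is a concatenation of elementary loops. The decomposition is unique: the first symbol identifies which of the three families the leading loop belongs to, and in the long-loop case, because $\Omega$ does not contain $(1,2)$, the position of the first occurrence of $(1,2)$ pins down $M$. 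Decomposing by the first elementary loop then gives
\begin{equation*}
a_N \;=\; 2\, a_{N-1} \;+\; \sum_{\substack{M \geq 1 \\ 2\cdot 13^M \leq N}} 10^{13^M}\, a_{N-2\cdot 13^M},
\end{equation*}
with $a_0 = 1$ for the empty word.

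Next I would prove by induction on $N$ that $a_N \leq 4^N$. Assuming the bound for all smaller indices and substituting into the recurrence,
\begin{equation*}
a_N \;\leq\; 2 \cdot 4^{N-1} + \sum_{M \geq 1} 10^{13^M}\, 4^{N - 2 \cdot 13^M} \;=\; 4^N\left(\tfrac{1}{2} + \sum_{M \geq 1} \left(\tfrac{10}{16}\right)^{13^M}\right),
\end{equation*}
so the induction closes provided
\begin{equation*}
\tfrac{1}{2} + \sum_{M \geq 1} \left(\tfrac{5}{8}\right)^{13^M} \;\leq\; 1.
\end{equation*}
This is straightforward: the series is dominated by its $M=1$ term $(5/8)^{13}$, which is of order $10^{-3}$, so the bracketed quantity is well under $1$. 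Taking $\tfrac{1}{N}\log$ and letting $N \to \infty$ then yields $h(\mathcal{G}) \leq \log 4$.

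I do not anticipate a serious obstacle. The only point requiring any care is the uniqueness of the elementary-loop decomposition of a word in $\mathcal{G}_N$, but this is immediate from the fact that $\Omega$, $\{(1,1),(2,1)\}$, and $\{(1,2)\}$ are pairwise disjoint, so that the role played by each symbol in the decomposition is unambiguous.
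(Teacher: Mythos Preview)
Your proof is correct and takes a genuinely different route from the paper's. The paper argues combinatorially: for a word in $\mathcal{G}_N$ it records the number $a$ of $\{(1,1),(2,1)\}$-symbols and the number $c$ of $\Omega$-symbols (so $2c+a=N$), invokes a result of Mahler to show that the number of ways to write $c$ as an \emph{ordered} sum of powers of $13$ grows subexponentially in $c$, bounds the number of ways to distribute the $a$ short loops among the long loops by a binomial coefficient, and then sums over $a$ to obtain $\#\mathcal{G}_N \lesssim_\varepsilon (4e^\varepsilon)^N$. Your approach bypasses all of this by observing that $\mathcal{G}_N$ is exactly the set of concatenations of code words in $\mathcal{C}$ of total length $N$, writing down the first-return recurrence $a_N = 2a_{N-1} + \sum_{M\geq 1} 10^{13^M} a_{N-2\cdot 13^M}$, and closing the induction $a_N \leq 4^N$ via the numerical check $\tfrac{1}{2} + \sum_{M\geq 1}(5/8)^{13^M} < 1$. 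This is cleaner and more self-contained: it avoids the appeal to Mahler's partition estimate and the multinomial bound, and it yields the sharp inequality $\#\mathcal{G}_N \leq 4^N$ rather than an $\varepsilon$-inflated one. The paper's method, on the other hand, makes the combinatorial structure (how the $\Omega$-blocks and short loops interleave) more explicit, which is in the spirit of the later estimates in Lemma~\ref{low}; but for the present lemma your recurrence argument is a strict simplification.
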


\begin{proof}
Fix $N \in \N$. Given a word in $\G_N$, let $c$ denote the number of symbols belonging to $\Omega$ and $a$ denote the number of symbols belonging to $\{(1,1), (2,1)\}$, noting that
\begin{enumerate}[(a)]
\item $2c+a=N$ and 
\item $c=\sum_{i=1}^j 13^{n_i}$ for some integers $n_1, \ldots n_j$.
\end{enumerate}
Fix $0 \leq a \leq N$ and let $\mathcal{S}_c$ be the set of possible ways that $c=\frac{N-a}{2}$ can be written as an ordered sum $c=\sum_{i=1}^j 13^{n_i}$. By ordered sum, we mean that if $(n_1', \ldots, n_j')$ is a permutation of $(n_1, \ldots, n_j)$ such that $(n_1', \ldots, n_j') \neq (n_1, \ldots, n_j)$ then $\sum_{i=1}^j 13^{n_i}$ is considered a distinct way of writing $c$ as a sum of powers of 13. Observe that $j \leq \frac{c}{13}$ (eg. consider writing $c= 13 \cdot \frac{c}{13}$ when $c$ is a multiple of $13$).

We begin by bounding $\# \mathcal{S}_c \leq 2^{\frac{c}{13}-1}$. Recall that any $n \in \N$ can be expressed in $2^{n-1}$ ways as an ordered sum of one or more positive integers. Moreover, $\# \mathcal{S}_c$ is clearly bounded above by the number of ways that $c/13$ can be decomposed into an ordered sum $\sum_{i=1}^\ell p_i$ for some positive integers $p_1, \ldots, p_\ell$. Hence $\# \mathcal{S}_c \leq2^{\frac{c}{13}-1}$.
Now let us return to considering a word in $\mathcal{G}_N$. Following each substring of symbols from $\Omega$, there is a tail of the same length consisting of $(1,2)$'s. The $a$ symbols from $\{(1,1), (2,1)\}$ can either be placed directly after any of these tails or at the beginning of the word. Therefore assuming that the string  contains $c=\frac{N-a}{2}$ symbols from $\Omega$ in blocks of lengths $13^{n_1}, \ldots, 13^{n_j}$ (so $c=\sum_{i=1}^{j} 13^{n_i}$) it follows that there are ${a+j \choose j}$ ways in which the $a$ symbols from $\{(1,1), (2,1)\}$ can be distributed. Bounding this above by the central binomial term and using the bounds ${2K \choose K} \leq 4^K$ and $j \leq \frac{c}{13}$ we obtain ${a+j \choose j} \leq 2^{a+\frac{N-a}{2\cdot13}}$. Hence
% hence for all $\varepsilon>0$ sufficiently small,
\begin{eqnarray*}
\# \G_N &\leq& \sum_{a=0}^N \#\mathcal{S}_{\frac{N-a}{2}}2^{a+\frac{N-a}{2\cdot13}} 10^{\frac{N-a}{2}}2^{a} \\
&\leq& \sum_{a=0}^N 2^{2a+\frac{N-a}{2}(2/13+\log_210)}\\
&=& \frac{2^{2(N+1)}-2^{(2/13+\log_210)(N+1)/2}}{2^2-2^{(2/13+\log_210)/2} }\lesssim 4^N
%&\lesssim_\varepsilon& \sum_{a=0}^N  2^{a+\frac{N-a}{2\cdot13}} 10^{\frac{N-a}{2}}2^{a}e^{\frac{\varepsilon(N-a)}{2}} \\
%&\lesssim_\varepsilon& (4e^\varepsilon)^{N}
\end{eqnarray*}
since $(2/13+\log_210)/2<2$, completing the proof of the lemma.
%where in the third inequality we have used that for sufficiently small $\varepsilon$, $\frac{4}{10^{\frac{1}{2}}2^{\frac{1}{2\cdot 13}}e^{\frac{\varepsilon}{2}}}>1$. This completes the proof of the lemma.

\end{proof}

Let $\I_N$ be the words in $\Sigma_N$ which label a path that ends at $v$ in the graph $G$ in Figure \ref{G}. Clearly $\G_N \subseteq \I_N$. Denoting $\I^*=\bigcup_{N=1}^\infty \I_N$ and $\Omega^*=  \bigcup_{N=1}^\infty \Omega^N$ observe that
\begin{equation}
\Sigma \setminus B=\{\u\mathtt{w}: \u \in \I^* \cup \emptyset, \mathtt{w} \in \Omega^\N\} \cup \{\w(1,2)^\infty: \w \in \Omega^* \cup \emptyset\}.
\label{boundary} \end{equation}
Define
$$h(\I)= \limsup_{N \to \infty} \frac{1}{N} \log \# \I_N.$$

\begin{lma} \label{I}
$h(\I)\leq \log 4.$
\end{lma}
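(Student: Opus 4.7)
The plan is to bound $\#\I_N$ by decomposing each word in $\I_N$ as an initial ``incomplete code word'' followed by a complete concatenation of code words returning to $v$, and then to invoke the bound $\#\G_M \lesssim_\varepsilon (4 e^\varepsilon)^M$ from the previous lemma.

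First I would set up the decomposition. Since $\Sigma = \overline{B}$ and every sequence in $B$ has the form $\u \u_1 \u_2 \cdots$ with $\u_i \in \C$ and $\u$ a suffix of some code word in $\C$, any length-$N$ subword that terminates at the vertex $v$ in the graph $G$ can be written as $\ii = \u\g$, where $\u$ is a (possibly empty) suffix of some element of $\C$, $|\u| = K$, and $\g \in \G_{N-K}$ (with the convention $\#\G_0 = 1$). The description of $\Sigma \setminus B$ in \eqref{boundary} ensures the same decomposition also applies to subwords arising from accumulation points.

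Next I would count the possible $\u$ of length $K$. Suffixes of the short words $(1,1)$ and $(2,1)$ contribute only $O(1)$ possibilities. For a long code word $\w(1,2)^{13^M}$ with $\w \in \Omega^{13^M}$, a suffix of length $K \leq 13^M$ is simply $(1,2)^K$, a single word independent of $(M,\w)$; a suffix of length $13^M < K \leq 2 \cdot 13^M$ has the form $\w'(1,2)^{13^M}$ with $\w' \in \Omega^{K - 13^M}$, contributing $10^{K - 13^M}$ possibilities. The key observation is that for a fixed $K$ there is at most one integer $M$ satisfying $K/2 \leq 13^M < K$ (since $13 > 2$), and for such $M$ one has $K - 13^M \leq K/2$. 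Hence the number of suffixes of length $K$ of code words in $\C$ is at most $10^{K/2} + C$ for some absolute constant $C$.

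Combining these counts with the bound from the previous lemma gives
\begin{equation*}
\#\I_N \;\leq\; \sum_{K=0}^{N} \bigl(10^{K/2} + C\bigr)\, \#\G_{N-K} \;\lesssim_\varepsilon\; (4 e^\varepsilon)^N \sum_{K=0}^N \left(\frac{\sqrt{10}}{4 e^\varepsilon}\right)^K.
\end{equation*}
Because $\sqrt{10} < 4$, the geometric series is uniformly bounded for all sufficiently small $\varepsilon > 0$, so $\#\I_N \lesssim_\varepsilon (4 e^\varepsilon)^N$; letting $\varepsilon \to 0$ yields $h(\I) \leq \log 4$. The main obstacle in executing this plan is the bookkeeping for the suffixes, particularly the ``at most one $M$'' claim and the resulting bound $K - 13^M \leq K/2$, which are precisely what make the inequality $\sqrt{10} < 4$ usable; everything downstream is a routine summation.
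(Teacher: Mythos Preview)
Your proof is correct and follows the same underlying strategy as the paper: decompose each word in $\I_N$ as a (possibly empty) suffix of a code word in $\C$ followed by a word in $\G_{N-K}$, then use the previous lemma to bound the $\G$-factor. The paper parametrises the suffix by the pair $(z,w)$ with $z=13^k$ the length of the $(1,2)$-tail and $w$ the length of the $\Omega$-prefix, which forces a case split according to whether $z<N/2$ or $z\geq N/2$. You instead parametrise by the total suffix length $K$ and use the observation that at most one power $13^M$ lies in $[K/2,K)$, giving the clean uniform bound $10^{K-13^M}\leq 10^{K/2}$ and reducing everything to a single geometric series with ratio $\sqrt{10}/4<1$. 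The two arguments are equivalent in content; yours is a slightly tidier packaging of the same estimate.
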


\begin{proof}
Fix $N \in \N$. Note that any word in $\I_N \setminus \G_N$ is either of the form
\begin{enumerate}[(a)]
\item $(1,2)^z\g$ for $\g \in \G_{N-z}$ or
\item $\w(1,2)^z\g$ for $z=13^k$ for some $k \in \N$, $\w \in \Omega^w$ where $0<w< z$ and $\g \in \G_{N-z-w}$.
\end{enumerate}
Fix any $\varepsilon>0$. The number of words of the form (a) is
$$\sum_{z=1}^N \# \G_{N-z} \lesssim_\varepsilon e^{N(h(\G)+\varepsilon)} =(4e^\varepsilon)^N.$$

The number of words of the form (b) is
\begin{eqnarray*}
\sum_{z=13^k<N} \sum_{w=1}^{\min\{z-1,N-z\}}10^{w}\# \G_{N-z-w} &\lesssim_\varepsilon&
\sum_{z=13^k<N} \sum_{w=1}^{\min\{z-1,N-z\}}10^{w}(4e^{\varepsilon})^{N-z-w}\\
&  \lesssim& \sum_{z=13^k<N}  \left(\frac{10}{4}\right)^{\min\{z-1,N-z\}} (4e^\varepsilon)^{N-z}.\end{eqnarray*}
Since
$$\sum_{z=13^k<\frac{N}{2}}  \left(\frac{10}{4}\right)^{\min\{z-1,N-z\}} (4e^\varepsilon)^{N-z}= \sum_{z=13^k<\frac{N}{2}}10^{z-1}4^{N-2z+1}e^{\varepsilon(N-z)} \lesssim_\varepsilon (4e^{2\varepsilon})^N,$$
and
$$\sum_{\frac{N}{2} \leq z=13^k<N }  \left(\frac{10}{4}\right)^{\min\{z-1,N-z\}} (4e^\varepsilon)^{N-z}=\sum_{\frac{N}{2} \leq z=13^k<N } (10e^\varepsilon)^{N-z} \lesssim_\varepsilon (10e^{2\varepsilon})^{\frac{N}{2}}< 4^N$$
for sufficiently small $\varepsilon$, we have that 
$$\# \I_N \lesssim_\varepsilon (4e^{2\varepsilon})^N.$$
Since $\varepsilon>0$ was arbitrary, the proof is complete.

\end{proof}

\section{Dimension estimates}\label{dim}

In this section, we introduce the sequences of scales which will be used for the lower and upper box dimension estimates, and prove Theorem \ref{main}. We also show how the proof of Theorem \ref{main} can be used to construct an infinitely generated self-affine set whose box dimension does not exist.

Let $\delta>0$. We let $k(\delta)$ denote the unique positive integer satisfying $12^{-k(\delta)} \leq \delta < 12^{1-k(\delta)} $ and $l(\delta)$ denote the unique positive integer satisfying $2^{-l(\delta)} \leq \delta < 2^{1-l(\delta)}$, noting that $k(\delta) < l(\delta)$ for sufficiently small $\delta$. By definition $l(\delta) = \lceil \frac{-\log \delta}{\log 2}\rceil$ and $k(\delta) = \lceil\frac{-\log \delta}{\log 12}\rceil$.

 Define the projection $\pi: \Delta^\N \to \{1,2\}^\N$ by $\pi((a_1,b_1)(a_2,b_2) \ldots)=(a_1a_2\ldots)$. For $\ii \in \Sigma_k$ and $l>k$ define 
\begin{equation}\label{M}
M(\ii,l)=\# \pi(\jj \in \Sigma_{l}: \jj|k=\ii).
\end{equation}

Our general covering strategy at each scale $\delta$ can now be described as follows. For each $\ii \in \Sigma_{k(\delta)}$ observe that $S_\ii([0,1]^2)$ is a rectangle of height $\frac{1}{12^{k(\delta)}} \approx \delta$. In particular $N_\delta(\Pi(\Sigma)) \approx \sum_{\ii \in \Sigma_{k(\delta)}} N_\delta(\Pi([\ii]))$. Notice that for each $\jj \in \Sigma_{l(\delta)}$, $S_{\jj}([0,1]^2)$ has width $\frac{1}{2^{l(\delta)}} \approx \delta$. Therefore for each $\ii \in \Sigma_{k(\delta)}$ we cover each projected cylinder $\Pi([\ii])$ independently by considering how many level $l(\delta)$ columns contain part of the set $\Pi(\Sigma)$ inside $\Pi([\ii])$. Since by definition the number of such columns is given by $M(\ii, l(\delta))$ we obtain
$$N_\delta(\Pi(\Sigma)) \approx \sum_{\ii \in \Sigma_{k(\delta)}}N_\delta(\Pi([\ii])) \approx \sum_{\ii \in \Sigma_{k(\delta)}}M(\ii, l(\delta)).$$

Define the null sequence $\{\delta_N\}_{N \in \N}$ by $\delta_N=\frac{1}{12^{13^N}}$, noting that $k(\delta_N)=13^N$ and $l(\delta_N)=\lceil13^N \frac{\log 12}{\log 2}\rceil$. Also define the null sequence $\{\delta_N'\}_{N \in \N}$ by $\delta_N'=\frac{1}{12^{13^{N-\frac{1}{2}}}}$, noting that $k(\delta_N')=\lceil13^{N-\frac{1}{2}}\rceil$ and $l(\delta_N')=\lceil13^{N-\frac{1}{2}}\frac{\log 12}{\log 2}\rceil$. 

In this section we will prove that

\begin{equation}\label{ineq}
\limsup_{N \to \infty} \frac{\log N_{\delta_N}(\Pi(\Sigma))}{-\log \delta_N}>\liminf_{N \to \infty} \frac{\log N_{\delta_N'}(\Pi(\Sigma))}{-\log \delta_N'}.
\end{equation}

Theorem \ref{main} will follow from \eqref{ineq} since it implies that $\ubd \Pi(\Sigma)> \lbd \Pi(\Sigma)$.

\begin{lma}[Scales with large dimension] \label{high}
$$\limsup_{N \to \infty} \frac{\log N_{\delta_N}(\Pi(\Sigma))}{-\log \delta_N} \geq  \frac{\log 10}{\log 12} +\log 2\left(\frac{1}{\log 2}-\frac{2}{\log 12}\right).$$
\end{lma}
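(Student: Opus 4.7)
The plan is to apply the lower-bound direction of the covering identity
\[
N_{\delta_N}(\Pi(\Sigma)) \gtrsim \sum_{\ii \in \Sigma_{k(\delta_N)}} M(\ii, l(\delta_N))
\]
discussed at the start of this section, restricting the sum to the family $\Omega^{13^N} \subset \Sigma_{13^N} = \Sigma_{k(\delta_N)}$. This restriction is legitimate because every $\ii \in \Omega^{13^N}$ is the length-$13^N$ prefix of $\ii(1,2)^{13^N} \in \mathcal{C} \subset B$, and the heuristic preceding the lemma already singles out exactly this choice of $\ii$.

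The key step is the lower bound $M(\ii, l(\delta_N)) \geq 2^L$ for every $\ii \in \Omega^{13^N}$, where $L := l(\delta_N) - 2\cdot 13^N$. I would prove this by exhibiting $2^L$ explicit legal extensions of $\ii$: for each $\jj = \jj_1 \cdots \jj_L \in \{(1,1),(2,1)\}^L$, the word $\ii(1,2)^{13^N}\jj$ is a length-$l(\delta_N)$ prefix of an element of $B$, since $\ii(1,2)^{13^N}$, $(1,1)$ and $(2,1)$ all lie in $\mathcal{C}$ and concatenations of $\mathcal{C}$-words give elements of $B$. These $2^L$ extensions have pairwise distinct sequences of first coordinates (they differ in their last $L$ positions), so they project to $2^L$ distinct level-$l(\delta_N)$ columns of width $2^{-l(\delta_N)} \approx \delta_N$.

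Combining these two ingredients yields $N_{\delta_N}(\Pi(\Sigma)) \gtrsim 10^{13^N} \cdot 2^L$. From $l(\delta_N) = \lceil 13^N \log 12/\log 2 \rceil$ and $-\log \delta_N = 13^N \log 12$ one has $L = 13^N(\log 12/\log 2 - 2) + O(1)$, and a direct computation gives
\[
\frac{\log N_{\delta_N}(\Pi(\Sigma))}{-\log \delta_N} \geq \frac{\log 10}{\log 12} + \frac{L \log 2}{13^N \log 12} + o(1),
\]
whose right-hand side tends to $\frac{\log 10}{\log 12} + \log 2 \bigl(\frac{1}{\log 2}-\frac{2}{\log 12}\bigr)$ as $N \to \infty$. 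Passing to $\limsup$ on the left then completes the argument.

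The only point requiring a moment of care is the lower-bound direction of the covering formula itself, which is a standard two-dimensional box-counting estimate based on the observation that a $\delta_N$-square meets at most a bounded number of level-$k(\delta_N)$ horizontal strips (each of height $\approx \delta_N$) and at most a bounded number of level-$l(\delta_N)$ vertical columns (each of width $\approx \delta_N$). The positivity of $L$ for large $N$ follows from $\log 12 > 2 \log 2$, and everything else reduces to elementary counting and arithmetic.
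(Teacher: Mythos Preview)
Your argument is correct and follows essentially the same approach as the paper: restrict the covering estimate to cylinders $\ii\in\Omega^{13^N}$, exhibit the legal extensions $\ii(1,2)^{13^N}\jj$ with $\jj\in\{(1,1),(2,1)\}^{l(\delta_N)-2\cdot 13^N}$ to bound $M(\ii,l(\delta_N))$ from below, and compute the resulting exponent. If anything, your write-up is slightly cleaner, since you state $M(\ii,l(\delta_N))\geq 2^{L}$ with $L=l(\delta_N)-2\cdot 13^N$, whereas the paper's displayed equality $M(\w,l(\delta_N))=2^{l(\delta_N)-k(\delta_N)}$ contains a minor typo in the exponent (its subsequent $\approx 2^{(\log 12/\log 2-2)13^N}$ and the final bound are nonetheless consistent with your $L$).
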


\begin{proof}
For all $\w \in \Omega^{k(\delta_N)}$ and $\u \in  \{(1,1), (2,1)\}^{l(\delta_N)-2k(\delta_N)}$, $\w(1,2)^{k(\delta_N)}\u \in \Sigma_{l(\delta_N)}$. In particular for any $\w \in \Omega^{k(\delta_N)}$,
\begin{equation} \label{M-lb}
M(\w, l(\delta_N)) =2^{l(\delta_N)-2k(\delta_N)} \approx 2^{(\frac{\log 12}{\log 2}-2)13^N},
\end{equation}
noting that $\frac{\log 12}{\log 2}>2$. Hence
\begin{eqnarray*}
N_{\delta_N}(\Pi(\Sigma)) &\geq& N_{\delta_N}\left(\bigcup_{\w \in \Omega^{k(\delta_N)}}\Pi([\w])\right)\\
&\approx& \sum_{\w \in \Omega^{k(\delta_N)}} N_{\delta_N}\left(\Pi([\w])\right)\\
&\approx& \sum_{\w \in \Omega^{k(\delta_N)}}M(\w, l(\delta_N)) \\
&\approx&10^{13^N}2^{(\frac{\log 12}{\log 2}-2)13^N}.
\end{eqnarray*}
Hence for some uniform constant $c>0$,
\begin{eqnarray*}
\frac{\log N_{\delta_N}(\Pi(\Sigma))}{-\log \delta_N} &\geq& \frac{13^N \log 10}{13^N \log 12} +\frac{13^N (\frac{\log 12}{\log 2}-2)\log 2}{13^N \log 12} +\frac{\log c}{-13^N \log 12} \\
&=& \frac{\log 10}{\log 12} +\log 2\left(\frac{1}{\log 2}-\frac{2}{\log 12}\right)+ \frac{\log c}{-13^N \log 12} .
\end{eqnarray*}
The result follows by letting $N \to \infty$.
\end{proof}

\begin{lma}[Scales with small dimension] \label{low}
$$\liminf_{N \to \infty} \frac{\log N_{\delta_N'}(\Pi(\Sigma))}{-\log \delta_N'} \leq \frac{\frac{1}{\sqrt{13}} \log 10+(1-\frac{1}{\sqrt{13}})\log 4}{\log 12}+\log 2\left(\frac{1}{\log 2}-\frac{1+\frac{1}{\sqrt{13}}}{\log 12}\right).$$
\end{lma}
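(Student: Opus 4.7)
\textbf{Proof strategy for Lemma \ref{low}.} The plan is to apply the general upper bound $N_\delta(\Pi(\Sigma)) \lesssim \sum_{\ii \in \Sigma_{k(\delta)}} M(\ii, l(\delta))$ at scale $\delta = \delta_N'$ and control the right-hand sum by parametrising each ``shadow'' $(\ii, \mathbf{a})$, with $\ii \in \Sigma_k$ and $\mathbf{a} \in \{1,2\}^{l-k}$ the $a$-projection of an extension of $\ii$ to length $l$, according to how the last symbol of $\ii$ sits within a $\mathcal{C}$-block. Throughout, write $k = k(\delta_N')$ and $l = l(\delta_N')$.

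The first step is a pivot estimate on $M(\ii, l)$: if one embeds the end of $\ii$ inside a long block $\w(1,2)^{13^M}$ with remaining length $t = 2 \cdot 13^M - p$, where $p$ is the block-prefix length contained in $\ii$, then the next $t$ symbols of any extension have $a$-coordinate forced to $1$ (every symbol inside a long block is of the form $(1,j)$), while the subsequent $(l - k - t)^+$ positions can realise any word in $\{1,2\}^{(l - k - t)^+}$ using the single-symbol blocks $(1,1)$ and $(2,1)$. Hence $M(\ii, l) \leq \sum_{(p, M)} 2^{\max(0, l - k - 2\cdot13^M + p)}$, the sum running over admissible embeddings; this is an overcount but suffices for an upper bound. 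The arithmetic inequality $\sqrt{13} > \log 12/\log 2$ (which holds, with a small margin) guarantees $13^N > l$, so for $M \geq N$ one has $t \geq 13^N > l - k$ and the embedding contributes only $1$, which means the effective range is $M \leq N - 1$.

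For the $\ii$-count, the portion of $\ii$ before the current-block prefix has length $k - p$ and ends at vertex $v$, hence lies in $\mathcal{I}_{k - p}$, contributing $\lesssim_\varepsilon (4e^\varepsilon)^{k - p}$ choices by Lemma~\ref{I}. The prefix itself is a word in $\Omega^p$ when $p \leq 13^M$ (giving $10^p$ choices) or of the form $\w'(1,2)^{p - 13^M}$ with $\w' \in \Omega^{13^M}$ when $p > 13^M$ (giving $10^{13^M}$ choices). Combining, for $M \leq N - 1$ and $p \leq 13^M$ the contribution of the profile $(p, M)$ reads, up to subexponential factors,
\[
(4e^\varepsilon)^{k-p} \cdot 10^p \cdot 2^{l - k - 2 \cdot 13^M + p} \;\approx\; 4^k \cdot (5/4)^p \cdot 2^{l - k} \cdot 4^{-13^M}.
\]
Performing the geometric sum over $p \in [0, 13^M]$ (dominated by $p = 13^M$) and then over $M \in \{0, \ldots, N-1\}$ (dominated by $M = N - 1$) produces an overall bound
\[
4^k \cdot 2^{l-k} \cdot (5/4)^{13^{N-1}} \;=\; 5^{13^{N-1}} \cdot 4^{13^{N-1/2} - 13^{N-1}} \cdot 2^{l - 13^{N-1/2}}.
\]
The complementary sub-cases $p > 13^M$ (for $M \leq N - 1$) give the same order of magnitude, while embeddings with $M \geq N$, or those where $\ii$ lies entirely within a single block without visiting $v$, are all bounded by $O(10^k)$, which is of strictly lower exponential order than the main term. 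Converting via $-\log \delta_N' = 13^{N-1/2} \log 12$ yields precisely the advertised upper bound.

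The main technical obstacle is ensuring the embedding decomposition really majorises $\sum_\ii M(\ii, l)$ without hidden interactions: a word such as an element of $\Omega^k$ can be realised as a prefix of many different long blocks, and the same extension $\jj \in \Sigma_l$ may be picked up by more than one embedding. This overcount is harmless for an upper bound, but must be tracked to ensure each shadow $(\ii, \mathbf{a})$ is represented at least once. The subsequent geometric optimisation identifying the maximisers $(p, M) = (13^{N-1}, N-1)$ is routine once the delicate boundary case $M = N$ has been excluded via the inequality $\sqrt{13} > \log 12/\log 2$, which is precisely the number-theoretic fact that powers up the construction's asymmetry between scales $\delta_N$ and $\delta_N'$.
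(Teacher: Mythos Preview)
Your strategy is essentially the paper's own proof: both bound $\sum_{\ii\in\Sigma_k}M(\ii,l)$ by a case analysis of where position $k$ falls inside the $\mathcal{C}$-block structure (your $(p,M)$-parametrisation is a repackaging of the paper's five cases (a)--(e), with $p\le 13^M$ corresponding to case~(b), $p>13^M$ to case~(e), and ``$\ii$ entirely within a block'' to cases~(c)--(d)), invoke Lemma~\ref{I} for the $\I_{k-p}$ prefix, and locate the dominant contribution at $M=N-1$, $p=13^{N-1}$. One algebraic slip: your displayed simplification should read $4^k\cdot 5^p\cdot 2^{l-k}\cdot 4^{-13^M}$ rather than $(5/4)^p$ --- with $(5/4)^p$ the sum would be dominated by $M=0$ --- but your stated final bound $4^k\cdot 2^{l-k}\cdot(5/4)^{13^{N-1}}$ is correct and agrees with the paper's.
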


 %{\color{red} If we wanted to we could redefine the subshift so that the lengths of the loops were growing superexponentially fast rather than exponentially fast.  Then redefining the scales accordingly we would probably just get \eqref{concat} as an upper bound which would slightly simplify things.}

\begin{proof}
Let $\varepsilon>0$. Recall that for all $N \in \N$, $-\log \delta_N'=13^{N-\frac{1}{2}}\log 12$, $k(\delta_N')=\lceil13^{N-\frac{1}{2}}\rceil$ and $l(\delta_N')=\lceil13^{N-\frac{1}{2}} \frac{\log 12}{\log 2}\rceil$. Recall that $\Sigma=\overline{B}$ where $B$ is the set of all infinite sequences which label a one-sided infinite path on the graph $G$ given in Figure 1, and where the set of points $\overline{B} \setminus B$ are characterised in \eqref{boundary}. Therefore any word $\ii \in \Sigma_{k(\delta_N')}$ has one of the following forms:
\begin{enumerate}[(a)]
\item $\ii=\u$ for $\u \in \I_{k(\delta_N')}$,
\item $\ii=\u\w$ for $\u \in \I_u$, $\w \in \Omega^w$ where $u+w=k(\delta_N')$,
\item $\ii=\w$ for $\w \in \Omega^{k(\delta_N')}$,
\item $\ii=\w(1,2)^z$ for $\w \in \Omega^w$ where $w+z=k(\delta_N')$,
\item $\ii=\u\w(1,2)^z$ for $\u \in \I_u$ and $\w \in \Omega^w$ where $u+w+z=k(\delta_N')$ and $z \leq w$.
\end{enumerate}
Let $Y_a\subset \Sigma_{k(\delta_N')}$ be the set of words which have the form (a) and let $X_a \subset \Sigma$ be the subset $\{\i \in \Sigma: \i|k(\delta_N') \in Y_a\}$. Define $X_b,X_c,X_d,X_e$ and $Y_b,Y_c,Y_d,Y_e$ analogously. We note that these sets are not all mutually exclusive, for example $Y_a \cap Y_e \neq \emptyset$, but this will not affect our bounds.

\noindent \emph{Upper bound on $N_{\delta_N'}(\Pi(X_a))$.}  For any $\jj \in \{(1,1), (2,1)\}^{l(\delta_N')-k(\delta_N')}$ and $\u \in \I_{k(\delta_N')}$, $\u \jj \in \Sigma^{l(\delta_N')}$.
Therefore for each $\u \in  \I_{k(\delta_N')}$, 
\begin{equation}
M(\u, l(\delta_N')) =2^{l(\delta_N')-k(\delta_N')} \approx 2^{13^{N-\frac{1}{2}}(\frac{\log 12}{\log 2}-1)}.\label{Ma}
\end{equation}
Hence
\begin{eqnarray*}
N_{\delta_N'}(\Pi(X_a)) &\approx& \sum_{\u \in Y_a} N_{\delta_N'}(\Pi([\u])) \\
&\approx& \sum_{\u \in \I_{k(\delta_N')}} M(\u, l(\delta_N')) \\
&\lesssim_\varepsilon& (4e^\varepsilon)^{13^{N-\frac{1}{2}}}2^{13^{N-\frac{1}{2}}(\frac{\log 12}{\log 2}-1)}
\end{eqnarray*}
  by Lemma \ref{I} and \eqref{Ma}. Since $\varepsilon>0$ was chosen arbitrarily and $-\log \delta_N'=13^{N-\frac{1}{2}}\log 12$, we deduce that
\begin{equation} \label{a}
\liminf_{N \to \infty} \frac{\log N_{\delta_N'}(\Pi(X_a))}{-\log \delta_N'} \leq \frac{\log 4}{\log 12}+\log 2\left(\frac{1}{\log 2}-\frac{1}{\log 12}\right).
\end{equation}

\noindent \emph{Upper bound on $N_{\delta_N'}(\Pi(X_c))$.} Suppose $\i \in X_c$ so $\i|k(\delta_N')=\w \in \Omega^{k(\delta_N')}$. By definition of $\Sigma$, either $\i \in \Omega^\N$ or $\i$ begins with $\u(1,2)^z$ for some $\u \in \Omega^*$  where $|\u| \geq k(\delta_N')=\lceil13^{N-\frac{1}{2}}\rceil$ and $z \geq 13^N$. For $N$ sufficiently large
$$z+|\u|\geq 13^N+13^{N-\frac{1}{2}}>13^{\frac{1}{2}}13^{N-\frac{1}{2}}>\lceil\frac{\log 12}{\log 2} 13^{N-\frac{1}{2}}\rceil=l(\delta_N').$$ 
In particular for any $\w \in\Omega^{k(\delta_N')}$,
\begin{equation} \label{Mc}
M(\w, l(\delta_N'))=1.
\end{equation}
By \eqref{Mc} 
\begin{eqnarray*}
N_{\delta_N'}(\Pi(X_c)) &\approx& \sum_{\w \in Y_c} N_{\delta_N'}(\Pi([\w])) \\
&\approx& \sum_{\w \in \Omega^{k(\delta_N')}} M(\w,l(\delta_N'))=10^{k(\delta_N')}\approx10^{13^{N-\frac{1}{2}}}.
\end{eqnarray*}
Therefore since $-\log \delta_N'=13^{N-\frac{1}{2}}\log 12$,
\begin{equation} \label{c}\liminf_{N \to \infty}\frac{\log N_{\delta_N'}(\Pi(X_c))}{-\log \delta_N'} \leq \frac{\log 10}{\log 12}.
\end{equation}

\noindent \emph{Upper bound on $N_{\delta_N'}(\Pi(X_d))$.}  For $x>0$ we let $\Tau(x)$ denote the smallest power of $13$ which is greater than or equal to $x$. Suppose $\i \in X_d$, so that $\i|k(\delta_N')=\w(1,2)^z$ for $\w \in \Omega^w$ where $w+z=k(\delta_N')$. Either $\i=\w(1,2)^\infty$ or $\i$ begins with $\w(1,2)^{z'}\jj$ for some $\jj \in \Sigma_1 \setminus\{ (1,2)\}$ and 
$$z' \geq \Tau(\max\{w,z\})=\Tau(\max\{w,k(\delta_N')-w\})=13^N$$
where the final equality is because for sufficiently large $N$
$$\max\{w,k(\delta_N')-w\} \geq \frac{k(\delta_N')}{2}=\frac{\lceil13^{N-\frac{1}{2}}\rceil}{2}>13^{N-1}.$$
 Moreover, for sufficiently large $N$
$$w+z' \geq 13^N>\lceil13^{N-\frac{1}{2}} \frac{\log 12}{\log 2}\rceil=l(\delta_N').$$ 
In particular, for any $\w(1,2)^z \in Y_d$,
\begin{equation}
M\left(\w(1,2)^{z}\;,\;l(\delta_N')\right)=1.
\label{Md}
\end{equation}
By \eqref{Md} 
\begin{eqnarray*}
N_{\delta_N'}(\Pi(X_d)) &\approx&\sum_{\ii \in Y_d} N_{\delta_N'}(\Pi([\ii])) \\
&\approx&\sum_{w=1}^{k(\delta_N')-1}\sum_{\w \in \Omega^w} M\left(\w(1,2)^{k(\delta_N')-w}\;,\;l(\delta_N')\right)\\
&\lesssim_\varepsilon& (10e^\varepsilon)^{k(\delta_N')}\approx(10e^\varepsilon)^{13^{N-\frac{1}{2}}}.
\end{eqnarray*}
 Since $\varepsilon>0$ was arbitrary and  $-\log \delta_N'=13^{N-\frac{1}{2}}\log 12$,
\begin{equation} \label{d}\liminf_{N \to \infty}\frac{\log N_{\delta_N'}(\Pi(X_d))}{-\log \delta_N'} \leq \frac{\log 10}{\log 12}.
\end{equation}

\noindent \emph{Upper bound on $N_{\delta_N'}(\Pi(X_b))$.} Suppose $\i \in X_b$, so $\i|k(\delta_N') =\u\w$ for $\u \in \I_u, \w \in \Omega^w$ where $u+w=k(\delta_N')$. Either $\i=\u\j$ where $\j \in \Omega^\N$ or $\i$ begins with $\u\v(1,2)^z$ where $\v \in \Omega^z$ with $z =|\v| = \Tau(|\v|) \geq \Tau(w)$. In particular, for any $\u\w\in Y_b$ 
\begin{equation} \label{Mb}
M(\u\w,l(\delta_N')) \leq 2^{l(\delta_N')-|\u|-|\v|-z}= 2^{l(\delta_N')-|\u|-2z}=2^{l(\delta_N')-k(\delta_N')+w-2z} \leq 2^{l(\delta_N')-k(\delta_N')+w-2\cdot\Tau(w)}.
\end{equation}
 By \eqref{Mb} and Lemma \ref{I}
\begin{align}
N_{\delta_N'}(\Pi(X_b)) &\approx \sum_{\ii \in Y_b} N_{\delta_N'}(\Pi([\ii]))  \nonumber\\
&\approx \sum_{w=1}^{k(\delta_N')-1} \sum_{\u \in \I^{k(\delta_N')-w}}\sum_{\w \in \Omega^{w}} M(\u\w, l(\delta_N'))\nonumber\\
&\lesssim_\varepsilon \sum_{w=1}^{k(\delta_N')-1} 10^{w}(4e^\varepsilon)^{k(\delta_N')-w} 2^{l(\delta_N')-k(\delta_N')+w-2\Tau(w)} \nonumber\\
&\leq \sum_{w=1}^{13^{N-1}} \left(\frac{10 \cdot 2}{4e^\varepsilon \cdot 2^2}\right)^{w} (4e^\varepsilon)^{k(\delta_N')} 2^{l(\delta_N')-k(\delta_N')}+ \label{2sums1} \\
&  \sum_{w=13^{N-1}+1}^{k(\delta_N')-1}\left(\frac{10 \cdot 2}{4e^\varepsilon \cdot 2^{2\sqrt{13}}}\right)^{w} (4e^\varepsilon)^{k(\delta_N')} 2^{l(\delta_N')-k(\delta_N')} \label{2sums}
\end{align}
where in the first sum \eqref{2sums1} we have used the trivial lower bound $\Tau(x) \geq x$ and in the second sum \eqref{2sums} we have used that for all $13^{N-1}+1 \leq x\leq k(\delta_N')-1=\lceil 13^{N-1/2}\rceil-1$,
$$\sqrt{13} x \leq \sqrt{13}(13^{N-\frac{1}{2}}-1) \leq 13^N=\Tau(x).$$
For sufficiently small $\varepsilon>0$, the first sum \eqref{2sums1} can be bounded above by
$$ \sum_{w=1}^{13^{N-1}} \left(\frac{10 \cdot 2}{4e^\varepsilon \cdot 2^2}\right)^{w} (4e^\varepsilon)^{k(\delta_N')} 2^{l(\delta_N')-k(\delta_N')} \lesssim_\varepsilon 10^{13^{N-1}} (4e^{2\varepsilon})^{k(\delta_N')-13^{N-1}}2^{l(\delta_N')-k(\delta_N')-13^{N-1}}.$$
For sufficiently small $\varepsilon>0$,
$$\frac{10 \cdot 2}{4 e^\varepsilon\cdot 2^{2\sqrt{13}}}=\frac{5}{e^\varepsilon4^{\sqrt{13}}}<1$$
hence the second sum \eqref{2sums} can be bounded above by 
\begin{eqnarray*}
\sum_{w=13^{N-1}+1}^{k(\delta_N')-1}\left(\frac{10 \cdot 2}{4 e^\varepsilon\cdot 2^{2\sqrt{13}}}\right)^{w} (4e^\varepsilon)^{k(\delta_N')} 2^{l(\delta_N')-k(\delta_N')} &\lesssim_\varepsilon&
10^{13^{N-1}} (4e^{2\varepsilon})^{k(\delta_N')-13^{N-1}}2^{l(\delta_N')-k(\delta_N')+13^{N-1}-2\cdot13^{N-\frac{1}{2}}}\\
&<&10^{13^{N-1}} (4e^{2\varepsilon})^{k(\delta_N')-13^{N-1}}2^{l(\delta_N')-k(\delta_N')-13^{N-1}}.\end{eqnarray*}
In particular
\begin{align*}
N_{\delta_N'}(\Pi(X_b))& \lesssim_\varepsilon 10^{13^{N-1}} (4e^{2\varepsilon})^{k(\delta_N')-13^{N-1}}2^{l(\delta_N')-k(\delta_N')-13^{N-1}}\\
&\approx10^{13^{N-1}} (4e^{2\varepsilon})^{13^{N-\frac{1}{2}}-13^{N-1}}2^{(\frac{\log 12}{\log 2}-1)13^{N-\frac{1}{2}}-13^{N-1}}.\end{align*}
Since $\varepsilon>0$ was chosen arbitrarily and $-\log \delta_N'=13^{N-\frac{1}{2}}\log 12$,
\begin{equation} \label{b}\liminf_{N \to \infty}\frac{\log N_{\delta_N'}(\Pi(X_b))}{-\log \delta_N'} \leq \frac{\frac{1}{\sqrt{13}} \log 10+(1-\frac{1}{\sqrt{13}})\log 4}{\log 12}+\log 2\left(\frac{1}{\log 2}-\frac{1+\frac{1}{\sqrt{13}}}{\log 12}\right).
\end{equation}

\noindent \emph{Upper bound on $N_{\delta_N'}(\Pi(X_e))$.} If $\u\w(1,2)^z \in Y_e$ with $|\w|=w$ and $|\u|=u$ then since $u+w \leq k(\delta_N')=\lceil 13^{N-\frac{1}{2}}\rceil$ we have 
$$l(\delta_N')-2w-u \geq l(\delta_N')- 2\lceil 13^{N-\frac{1}{2}}\rceil> l(\delta_N')-\lceil \frac{\log 12}{\log 2} 13^{N-\frac{1}{2}}\rceil=0.$$
In particular
\begin{equation}\label{Me}
M(\u\w(1,2)^z, l(\delta_N'))=2^{l(\delta_N')-2w-u}.
\end{equation} By \eqref{Me} and Lemma \ref{I},
\begin{eqnarray*}
N_{\delta_N'}(\Pi(X_e)) &\approx& \sum_{\ii \in Y_e} N_{\delta_N'}(\Pi([\ii])) \\
&\approx & \sum_{w=13^r\leq 13^{N-1}}\sum_{u=1}^{k(\delta_N')-w-1} \sum_{\u \in \I^{u}}\sum_{\w \in \Omega^w} M(\u\w(1,2)^{k(\delta_N')-u-w},l(\delta_N')) \\
&\lesssim_\varepsilon &\sum_{w=13^r\leq 13^{N-1}}\sum_{u=1}^{k(\delta_N')-w-1} (4e^\varepsilon)^{u}10^{w}2^{l(\delta_N')-2w-u}\\
& \lesssim_\varepsilon& \sum_{w=13^r\leq 13^{N-1}}\left(\frac{10 \cdot 2}{4e^{2\varepsilon} \cdot 2^2}\right)^{w} \left(\frac{4e^{2\varepsilon}}{2}\right)^{k(\delta_N')}2^{l(\delta_N')} \\
&\lesssim_\varepsilon& 10^{13^{N-1}} (4e^{3\varepsilon})^{k(\delta_N')-13^{N-1}}2^{l(\delta_N')-k(\delta_N')-13^{N-1}}\\
&\approx&10^{13^{N-1}} (4e^{3\varepsilon})^{13^{N-\frac{1}{2}}-13^{N-1}}2^{(\frac{\log 12}{\log 2}-1)13^{N-\frac{1}{2}}-13^{N-1}} .
\end{eqnarray*}
Since $\varepsilon>0$ was chosen arbitrarily and  $-\log \delta_N'=13^{N-\frac{1}{2}}\log 12$,
\begin{equation} \label{e}\liminf_{N \to \infty}\frac{\log N_{\delta_N'}(\Pi(X_e))}{-\log \delta_N'} \leq \frac{\frac{1}{\sqrt{13}} \log 10+(1-\frac{1}{\sqrt{13}})\log 4}{\log 12}+\log 2\left(\frac{1}{\log 2}-\frac{1+\frac{1}{\sqrt{13}}}{\log 12}\right).
\end{equation}

Since the upper bounds in \eqref{b} and \eqref{e} are strictly greater than the upper bounds in \eqref{a}, \eqref{c} and \eqref{d} the proof is complete.
\end{proof}

\begin{proof}[Proof of Theorem \ref{main}]
 $\Pi(\Sigma)$ is invariant under the smooth expanding map $T(x,y)=(mx \mod 1, ny \mod 1)$. Note that to four decimal places
$$ \frac{\log 10}{\log 12} +\log 2\left(\frac{1}{\log 2}-\frac{2}{\log 12}\right) \approx 1.3687 $$
and
$$ \frac{\frac{1}{\sqrt{13}} \log 10+(1-\frac{1}{\sqrt{13}})\log 4}{\log 12}+\log 2\left(\frac{1}{\log 2}-\frac{1+\frac{1}{\sqrt{13}}}{\log 12}\right)\approx 1.3038.$$
 By Lemmas \ref{high} and \ref{low}
$$
\ubd \Pi(\Sigma)\geq \limsup_{N \to \infty} \frac{\log N_{\delta_N}(\Pi(\Sigma))}{-\log \delta_N}>\liminf_{N \to \infty} \frac{\log N_{\delta_N'}(\Pi(\Sigma))}{-\log \delta_N'}\geq\lbd \Pi(\Sigma).$$
In particular, the box dimension of $\Pi(\Sigma)$ does not exist.
\end{proof}

\begin{rem}
Lemmas \ref{high} and \ref{low} can also be used to demonstrate the existence of infinitely generated self-affine sets whose box dimensions are distinct.  Consider the countable family of affine contractions 
$$\{S_{(1,1)}\} \cup \{S_{(1,2)}\} \cup \bigcup_{N=1}^\infty \bigcup_{\w \in \Omega^{13^N}} \{S_{\w(1,2)^{13^N}}\}$$
which generates the infinitely generated self-affine set $E=\Pi(\tilde{\Sigma})$ where
$$\tilde{\Sigma}:= \{\u_1\u_2 \ldots \, :  \,\u_i \in \mathcal{C}\; \textnormal{for all $i \in \N$}\}.$$
Since $E \subset F$, $\lbd E \leq \lbd F$. On the other hand, for all $N \in \N$, $\w \in \Omega^{k(\delta_N)}$ and $\u \in  \{(1,1), (2,1)\}^{l(\delta_N)-2k(\delta_N)}$, 
$$[\w(1,2)^{13^N}\u] \cap \tilde{\Sigma} \neq \emptyset.$$ Therefore by bounding $N_{\delta_N}(E)$ in the same way as in Lemma \ref{high} we deduce that $\lbd E< \ubd E$.
\end{rem}

\begin{comment}

\begin{proof}[Proof of Corollary \ref{inf}]
Recall the definition of $\mathcal{C}$ from \eqref{codes} and define 
$$\tilde{\Sigma}:= \{\u_1\u_2 \ldots \, :  \,\u_i \in \mathcal{C}\; \textnormal{for all $i \in \N$}\}.$$
Clearly $\tilde{\Sigma}\subset \Sigma$, moreover $\tilde{\Sigma}$ coincides with the infinite sequences that label the infinite paths on $G$ in Figure \ref{G} that begin at the vertex $v$, in particular $E=\Pi(\tilde{\Sigma})$ is no longer invariant under $T$. $E$ is the infinitely generated self-affine set corresponding to the countable family of affine contractions 
$$\{S_{(1,1)}\} \cup \{S_{(1,2)}\} \cup \bigcup_{N=1}^\infty \bigcup_{\w \in \Omega^{13^N}} \{S_{\w(1,2)^{13^N}}\}.$$
Since $E \subset F$, 
$$\lbd E \leq \lbd F \leq  \frac{\frac{1}{\sqrt{13}} \log 10+(1-\frac{1}{\sqrt{13}})\log 4}{\log 12}+\log 2\left(\frac{1}{\log 2}-\frac{1+\frac{1}{\sqrt{13}}}{\log 12}\right).$$
On the other hand, for all $N \in \N$, $\w \in \Omega^{k(\delta_N)}$ and $\u \in  \{(1,1), (2,1)\}^{l(\delta_N)-2k(\delta_N)}$, 
$$[\w(1,2)^{13^N}\u] \cap \tilde{\Sigma} \neq \emptyset.$$ Therefore by bounding $N_{\delta_N}(E)$ in the same way as in Lemma \ref{high} we deduce that 
$$\ubd E \geq \frac{\log 10}{\log 12} +\log 2\left(\frac{1}{\log 2}-\frac{2}{\log 12}\right) .$$
In particular $\lbd E< \ubd E$.
\end{proof}
\end{comment}

\section{Further questions} \label{further}

Here we suggest some possible directions for future work.

\begin{ques} Does there exist an expanding repeller whose box dimension does not exist? Namely, does there exist a smooth expanding map $f:M \to M$ of a Riemannian manifold $M$ and compact set $\Lambda=f(\Lambda)$ such that $\Lambda=\{x \in U: f^n(x) \in U,\; \forall n \in \N\}$ for some open neighbourhood $U$ of $\Lambda$?
\end{ques}

\begin{ques} Given a smooth diffeomorphism $f:M \to M$, does the box dimension of its basic set (or intersections of the basic set with local stable and unstable manifolds) always exist? 
\end{ques}

\begin{bibdiv}
\begin{biblist}

\bib{bhr}{article}{
   author={B\'{a}r\'{a}ny, Bal\'{a}zs},
   author={Hochman, Michael},
   author={Rapaport, Ariel},
   title={Hausdorff dimension of planar self-affine sets and measures},
   journal={Invent. Math.},
   volume={216},
   date={2019},
   number={3},
   pages={601--659},
}

\bib{barreira-paper}{article}{
   author={Barreira, Luis},
   title={A non-additive thermodynamic formalism and applications to
   dimension theory of hyperbolic dynamical systems},
   journal={Ergodic Theory Dynam. Systems},
   volume={16},
   date={1996},
   number={5},
   pages={871--927},
}

\bib{barreira-book}{book}{
   author={Barreira, Luis},
   title={Dimension and recurrence in hyperbolic dynamics},
   series={Progress in Mathematics},
   volume={272},
   publisher={Birkh\"{a}user Verlag, Basel},
   date={2008},
}

\bib{bg}{article}{
   author={Barreira, Luis},
   author={Gelfert, Katrin},
   title={Dimension estimates in smooth dynamics: a survey of recent
   results},
   journal={Ergodic Theory Dynam. Systems},
   volume={31},
   date={2011},
   number={3},
   pages={641--671},
}

\bib{bedford}{thesis}{,
  author={Bedford, Tim},
  title={Crinkly curves, Markov partitions and dimension},
  date={1984},
  school={University of Warwick}
}

\bib{bh}{article}{
   author={Blanchard, F.},
   author={Hansel, G.},
   title={Syst\`emes cod\'{e}s},
   journal={Theoret. Comput. Sci.},
   volume={44},
   date={1986},
   number={1},
   pages={17--49}
}

\bib{bowen}{article}{
   author={Bowen, Rufus},
   title={Hausdorff dimension of quasicircles},
   journal={Inst. Hautes \'{E}tudes Sci. Publ. Math.},
   number={50},
   date={1979},
   pages={11--25},
}

\bib{cpz}{article}{
   author={Cao, Yongluo},
   author={Pesin, Yakov},
   author={Zhao, Yun},
   title={Dimension estimates for non-conformal repellers and continuity of
   sub-additive topological pressure},
   journal={Geom. Funct. Anal.},
   volume={29},
   date={2019},
   number={5},
   pages={1325--1368},
}

\bib{cp}{article}{
   author={Chen, Jianyu},
   author={Pesin, Yakov},
   title={Dimension of non-conformal repellers: a survey},
   journal={Nonlinearity},
   volume={23},
   date={2010},
   number={4},
   pages={R93--R114},
}

\bib{ds}{article}{
   author={Das, Tushar},
   author={Simmons, David},
   title={The Hausdorff and dynamical dimensions of self-affine sponges: a
   dimension gap result},
   journal={Invent. Math.},
   volume={210},
   date={2017},
   number={1},
   pages={85--134},
}

\bib{falconer}{article}{
   author={Falconer, K. J.},
   title={The Hausdorff dimension of self-affine fractals},
   journal={Math. Proc. Cambridge Philos. Soc.},
   volume={103},
   date={1988},
   number={2},
   pages={339--350},
}

\bib{falc-paper}{article}{
   author={Falconer, K. J.},
   title={Dimensions and measures of quasi self-similar sets},
   journal={Proc. Amer. Math. Soc.},
   volume={106},
   date={1989},
   number={2},
   pages={543--554},
}

\bib{sss}{article}{
   author={Falconer, K. J.},
   title={Sub-self-similar sets},
   journal={Trans. Amer. Math. Soc.},
   volume={347},
   date={1995},
   number={8},
   pages={3121--3129},
}

\begin{comment}\bib{falconer-book}{book}{
    AUTHOR = {Falconer, Kenneth},
     TITLE = {Fractal geometry},
   EDITION = {Third},
      NOTE = {Mathematical foundations and applications},
 PUBLISHER = {John Wiley \& Sons, Ltd., Chichester},
      YEAR = {2014},
     PAGES = {xxx+368},
      ISBN = {978-1-119-94239-9},
   MRCLASS = {28-01 (11K55 28A78 28A80 37C45 37F10)},
  MRNUMBER = {3236784},
MRREVIEWER = {Manuel Mor\'{a}n},
}\end{comment}

\bib{fraser}{article}{
  title={Fractal geometry of Bedford-McMullen carpets},
  author={Fraser, Jonathan M},
journal={to appear in Proceedings of the Fall 2019 Jean-Morlet Chair programme, Springer Lecture Notes Series}
  eprint={https://arxiv.org/abs/2008.10555},
  year={2020}
}

\bib{fj}{article}{
   author={Fraser, J.},
   author={Jurga, N.},
   title={Box dimensions of $(\times m, \times n)$-invariant sets},
  eprint={https://arxiv.org/abs/2009.04208}
}

\bib{gp}{article}{
   author={Gatzouras, Dimitrios},
   author={Peres, Yuval},
   title={Invariant measures of full dimension for some expanding maps},
   journal={Ergodic Theory Dynam. Systems},
   volume={17},
   date={1997},
   number={1},
   pages={147--167},
}

\begin{comment}\bib{kr}{article}{
   author={K\"{a}enm\"{a}ki, Antti},
   author={Reeve, Henry W. J.},
   title={Multifractal analysis of Birkhoff averages for typical infinitely
   generated self-affine sets},
   journal={J. Fractal Geom.},
   volume={1},
   date={2014},
   number={1},
   pages={83--152},
}\end{comment}

\bib{kv}{article}{
   author={K\"{a}enm\"{a}ki, Antti},
   author={Vilppolainen, Markku},
   title={Dimension and measures on sub-self-affine sets},
   journal={Monatsh. Math.},
   volume={161},
   date={2010},
   number={3},
   pages={271--293},
}

\bib{kp}{article}{
   author={Kenyon, R.},
   author={Peres, Y.},
   title={Hausdorff dimensions of sofic affine-invariant sets},
   journal={Israel J. Math.},
   volume={94},
   date={1996},
   pages={157--178}
}

\bib{kwietniak}{article}{
   author={Kwietniak, Dominik},
   author={Lacka, Martha},
   author={Oprocha, Piotr},
   title={A panorama of specification-like properties and their
   consequences},
   conference={
      title={Dynamics and numbers},
   },
   book={
      series={Contemp. Math.},
      volume={669},
      publisher={Amer. Math. Soc., Providence, RI},
   },
   date={2016},
   pages={155--186},
}

\bib{lg}{article}{
   author={Lalley, Steven P.},
   author={Gatzouras, Dimitrios},
   title={Hausdorff and box dimensions of certain self-affine fractals},
   journal={Indiana Univ. Math. J.},
   volume={41},
   date={1992},
   number={2},
   pages={533--568},
}

%\bib{mahler}{article}{  author={Mahler, Kurt},title={On a special functional equation},   journal={J. London Math. Soc.},   volume={15},   date={1940},   pages={115--123},}

\bib{mm}{article}{
   author={McCluskey, Heather},
   author={Manning, Anthony},
   title={Hausdorff dimension for horseshoes},
   journal={Ergodic Theory Dynam. Systems},
   volume={3},
   date={1983},
   number={2},
   pages={251--260},
}

\bib{mcmullen}{article}{
   author={McMullen, Curt},
   title={The Hausdorff dimension of general Sierpi\'{n}ski carpets},
   journal={Nagoya Math. J.},
   volume={96},
   date={1984},
   pages={1--9}
}

\bib{neun}{article}{
   author={Neunh\"{a}userer, J.},
   title={Number theoretical peculiarities in the dimension theory of
   dynamical systems},
   journal={Israel J. Math.},
   volume={128},
   date={2002},
   pages={267--283},
}

\bib{pv}{article}{
   author={Palis, J.},
   author={Viana, M.},
   title={On the continuity of Hausdorff dimension and limit capacity for
   horseshoes},
   conference={
      title={Dynamical systems, Valparaiso 1986},
   },
   book={
      series={Lecture Notes in Math.},
      volume={1331},
      publisher={Springer, Berlin},
   },
   date={1988},
   pages={150--160},
}

\bib{pesin}{book}{
   author={Pesin, Yakov B.},
   title={Dimension theory in dynamical systems},
   series={Chicago Lectures in Mathematics},
   note={Contemporary views and applications},
   publisher={University of Chicago Press, Chicago, IL},
   date={1997},
}

\bib{pw}{article}{
   author={Pollicott, Mark},
   author={Weiss, Howard},
   title={The dimensions of some self-affine limit sets in the plane and
   hyperbolic sets},
   journal={J. Statist. Phys.},
   volume={77},
   date={1994},
   number={3-4},
   pages={841--866},
}

\begin{comment}
\bib{reeve}{article}{
   author={Reeve, Henry W. J.},
   title={Infinite non-conformal iterated function systems},
   journal={Israel J. Math.},
   volume={194},
   date={2013},
   number={1},
   pages={285--329},
}\end{comment}

\bib{ruelle}{article}{
   author={Ruelle, David},
   title={Repellers for real analytic maps},
   journal={Ergodic Theory Dynam. Systems},
   volume={2},
   date={1982},
   number={1},
   pages={99--107},
}

\bib{sw}{article}{
   author={Schmeling, J\"{o}rg},
   title={Dimension theory of smooth dynamical systems},
   conference={
      title={Ergodic theory, analysis, and efficient simulation of dynamical
      systems},
   },
   book={
      publisher={Springer, Berlin},
   },
   date={2001},
   pages={109--129},
}

\bib{takens}{article}{
   author={Takens, Floris},
   title={Limit capacity and Hausdorff dimension of dynamically defined
   Cantor sets},
   conference={
      title={Dynamical systems, Valparaiso 1986},
   },
   book={
      series={Lecture Notes in Math.},
      volume={1331},
      publisher={Springer, Berlin},
   },
   date={1988},
   pages={196--212},
}

\end{biblist}
\end{bibdiv}

\end{document}